\documentclass[11pt, a4paper]{article}
\usepackage{amsmath}
\usepackage{amsthm}%% The amsthm package provides extended theorem environments
\usepackage{amssymb}
\usepackage{array}
\usepackage{multirow}
\usepackage{graphicx}
\usepackage{epsfig}
\usepackage{mathrsfs}
\usepackage{hhline}
\usepackage{longtable}
\usepackage{array}
\usepackage{enumerate}
\usepackage{xcolor}
\usepackage{float}

\usepackage{graphicx}
\graphicspath{%
    {converted_graphics/}% inserted by PCTeX
    {/}% inserted by PCTeX
}

\newcolumntype{"}{@{\hskip\tabcolsep\vrule width 1pt\hskip\tabcolsep}}

\usepackage[mathlines]{lineno}
\modulolinenumbers[2]
\newcommand*\patchAmsMathEnvironmentForLineno[1]{%
\expandafter\let\csname old#1\expandafter\endcsname\csname #1\endcsname  \expandafter\let\csname oldend#1\expandafter\endcsname\csname end#1\endcsname  \renewenvironment{#1}%
{\linenomath\csname old#1\endcsname}%
{\csname oldend#1\endcsname\endlinenomath}}%
\newcommand*\patchBothAmsMathEnvironmentsForLineno[1]{%
\patchAmsMathEnvironmentForLineno{#1}%
\patchAmsMathEnvironmentForLineno{#1*}}%
\AtBeginDocument{%
\patchBothAmsMathEnvironmentsForLineno{equation}%
\patchBothAmsMathEnvironmentsForLineno{align}%
\patchBothAmsMathEnvironmentsForLineno{flalign}%
\patchBothAmsMathEnvironmentsForLineno{alignat}%
\patchBothAmsMathEnvironmentsForLineno{gather}%
\patchBothAmsMathEnvironmentsForLineno{multline}%
}

\def\Z{\mathbb{Z}}
\def\N{\mathbb{N}}

\definecolor{vividviolet}{rgb}{0.62, 0.0, 1.0}
\def\nt{\noindent}
\def\ms{\medskip}
\def\rsq{\hspace*{\fill}$\blacksquare$\medskip}
\def\di{\displaystyle}
\newtheoremstyle{de}%name
  {10pt}          % space above
  {10pt}  % space below
  {\rm}  % bofy font
  {}%{\parindent}     % ident - empty=no indent,  \parindent= paragraph indent
  {\bf}  % thm head font
  {. }    % punctuation after thm head
  { }    % space after thm head: `` ``=normal \newline=linebreak
  {}     % thm head specification
\theoremstyle{de}

\newtheorem{de}{Definition}[section]
\newtheorem{example}{Example}[section]
\newtheorem{rem}[de]{Remark}
\newtheorem{problem}{Problem}[section]

\newtheoremstyle{theorem}%name
  {10pt}          % space above
  {10pt}  % space below
  {\it}  % bofy font
  {}%{\parindent}     % ident - empty=no indent,  \parindent= paragraph indent
  {\bf}  % thm head font
  {. }    % punctuation after thm head
  { }    % space after thm head: `` ``=normal \newline=linebreak
  {}     % thm head specification
\theoremstyle{theorem}

\numberwithin{equation}{section}
\def\Z{\mathbb{Z}}
\def\N{\mathbb{N}}

\newtheorem{theorem}{Theorem}[section]
\newtheorem{lemma}[theorem]{Lemma}%[section]

\numberwithin{equation}{section}

\begin{document}
\baselineskip18truept
\normalsize
\begin{center}
{\mathversion{bold}\Large \bf Necessary and sufficient conditions of a class of bipartite graphs \\with local antimagic chromatic number 2 - an algebraic approach}

\bigskip
{\large  G.C. Lau{$^{a,}$}\footnote{Corresponding author}, W.C. Shiu{$^{b}$} }\\

\medskip

\emph{{$^a$}77D, Jalan Suboh,}\\
%\emph{Universiti Teknologi MARA (Segamat Campus),}\\
\emph{85000, Johor, Malaysia.}\\
\emph{geeclau@yahoo.com}\\

\medskip

\emph{{$^b$}Department of Mathematics, The Chinese University of Hong Kong,}\\
\emph{Shatin, Hong Kong.}\\
\emph{wcshiu@associate.hkbu.edu.hk}\\

\end{center}

%\quad {\Lau Lau}\quad {\Shiu Shiu}\quad {\magenta Shiu or Question} \quad {\cyan Maybe no use}

\medskip
\begin{abstract}
For a connected graph $G = (V, E)$,  a bijective edge labeling $f:E \to\{1,\ldots ,|E|\}$ is a local antimagic labeling of $G$ if it induces a vertex labeling $f^+$ such that for any pair of adjacent vertices $x$ and $y$, $f^+(x)\not= f^+(y)$, where the induced vertex label $f^+(x)= \sum f(xu)$, with $u$ ranging over all the vertices adjacent to $x$.  The minimum number of distinct induced vertex labels over all local antimagic labelings of $G$ is the local antimagic chromatic number of $G$, denoted $\chi_{la}(G)$. In this paper, we make use of algebraic analysis to obtain necessary and sufficient conditions for every bipartite graph with all vertices of degree 2 except exactly three vertices of degree at least 3 to have local antimagic chromatic number 2. Moreover, we showed that the consecutive edge labels of every induced path of each case is unique.

\medskip
\noindent Keywords: Local antimagic labeling; local antimagic chromatic number; bipartite graphs
\medskip

\noindent MSC: 05C78, 05C69.
\end{abstract}
%\normalsize

\tolerance=10000
\baselineskip12truept
%\newbox\thebox
%\global\setbox\thebox=\vbox to 0.2truecm{\hsize
%0.15truecm\noindent\hfill}
%\def\boxit#1{\vbox{\hrule\hbox{\vrule\kern0pt
%     \vbox{\kern0pt#1\kern0pt}\kern0pt\vrule}\hrule}}
%\def\qed{\lower0.1cm\hbox{\noindent \boxit{\copy\thebox}}}
\def\qed{\hspace*{\fill}$\Box$\medskip}

\def\s{\,\,\,}
\def\ss{\smallskip}
\def\ms{\medskip}
\def\bs{\bigskip}
\def\c{\centerline}
\def\nt{\noindent}
\def\ul{\underline}
\def\lc{\lceil}
\def\rc{\rceil}
\def\lf{\lfloor}
\def\rf{\rfloor}
\def\a{\alpha}
\def\b{\beta}
\def\n{\nu}
\def\o{\omega}
\def\ov{\over}
\def\m{\mu}
\def\t{\tau}
\def\th{\theta}
\def\k{\kappa}
\def\l{\lambda}
\def\L{\Lambda}
\def\g{\gamma}
\def\d{\delta}
\def\D{\Delta}
\def\e{\epsilon}
\def\lg{\langle}
\def\rg{\tongle}
\def\p{\prime}
\def\sg{\sigma}
\def\to{\rightarrow}

\newcommand{\K}{K\lower0.2cm\hbox{4}\ }
\newcommand{\cl}{\centerline}
\newcommand{\om}{\omega}
\newcommand{\ben}{\begin{enumerate}}

\newcommand{\een}{\end{enumerate}}
\newcommand{\bit}{\begin{itemize}}
\newcommand{\eit}{\end{itemize}}
\newcommand{\bea}{\begin{eqnarray*}}
\newcommand{\eea}{\end{eqnarray*}}
\newcommand{\bear}{\begin{eqnarray}}
\newcommand{\eear}{\end{eqnarray}}

\section{Introduction}

%\nt In 1994, Hartsfield and Ringer~\cite{H+R} introduced the concept of antimagic labeling of a graph $G(V,E)$. A bijective edge labeling $f: E \to \{1, \ldots, |E|\}$ is called an antimagic labeling of $G$ if for any two distinct vertices $u$ and $v$, $w(u)\ne w(v)$, where $w(u) = \sum f(e)$ with $e$ ranging over all the edges incident to $u$. The most famous unsolved problems are the following conjectures~\cite{H+R}.

%\begin{conjecture}  Every connected graph other than $K_2$ is antimagic. \end{conjecture}

%\begin{conjecture} Every tree other than $K_2$ is antimagic. \end{conjecture}

In 2017, Arumugam et al.~\cite{Arumugam} introduced the concept of local antimagic labeling and local antimagic chromatic number. For a connected graph $G$, we say $G$ is {\it local antimagic} if it admits a {\it local antimagic labeling}, i.e., a bijective edge labeling $f : E \to \{1,\ldots ,|E|\}$ such that the induced vertex labeling $f^+ : V \to \Z$ given by $f^+(u) = \sum_{e\in E_u} f(e)$ (where $E_u$ is the set of all the edges incident to $u$) has the property that any two adjacent vertices have distinct induced vertex labels. Thus, $f^+$ is a proper coloring of $G$. By definition, $G$ has order at least 3 The number of distinct induced vertex labels under $f$ is denoted by $c(f)$, and is called the {\it color number} of $f$. The {\it local antimagic chromatic number} of $G$, denoted by $\chi_{la}(G)$, is $\min\{c(f) \;|\; f\mbox{ is a local antimagic labeling of } G\}$. Clearly, $2\le \chi_{la}(G)\le |V(G)|$.  In~\cite{Haslegrave}, Haslegrave proved that the local antimagic chromatic number is well-defined for every connected graph except $K_2$. Thus, every graph without any $K_2$ component admits a local antimagic labeling if we also define $f^+(u) = 0$ for any isolated vertex $u$.

\ms\nt The local antimagic chromatic number of some families of standard graphs are first presented in~\cite{Arumugam}. In~\cite[Corollary 2]{LNS-GC}, Lau et al. completely settled the local antimagic chromatic number of wheels and complete bipartite graphs. In~\cite[Theorem 2.4]{LSN-IJMSI}, the authors completely determined the local antimagic chromatic number of the one-point union of cycles (that has all vertices are of degree 2 except one vertex is of degree at least 3). As a natural extension, the authors in~\cite{Lau+Shiu+Nal+Zhang+Prem} completely characterized bridge graphs (that has all vertices are of degree 2 except two vertices, say $u$ and $v$, are of equal degree at least 3) with local antimagic chromatic number 2.

%\ms\nt A graph consisting of $s$ paths joining two vertices is called an {\it $s$-bridge graph}, which is denoted by $\th(a_1,\dots, a_s)$, where $s\ge 2$ and $1\le a_1\le a_2 \le \cdots \le a_s$ are the lengths of the $s$ paths.  For convenience, we shall let $\th_s = \th(a_1,a_2,\ldots,a_s)$ if there is no confusion. In this paper, we shall characterize $\th_s$ with $\chi_{la}(\theta_s)=2$.

\ms\nt The following lemma in \cite[Lemma 2.1]{LSN-DMGT} or~\cite[Lemma 2.3]{LSN-IJMSI} gives a necessary condition for a bipartite graph $G$ to have $\chi_{la}(G) = 2$.

\begin{lemma}[{\cite[Lemma 2.3]{LSN-IJMSI}}]\label{lem-2part} Let $G$ be a graph of size $q$. Suppose there is a local antimagic labeling of $G$ inducing a $2$-coloring of $G$ with colors $x$ and $y$, where $x<y$. If $X$ and $Y$ are the sets of vertices colored $x$ and $y$, respectively, then $G$ is a bipartite graph with bipartition $(X,Y)$ and $|X|>|Y|$. Moreover,
$x|X|=y|Y|= \frac{q(q+1)}{2}$.
\end{lemma}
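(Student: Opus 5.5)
The plan is to use only the definition of a local antimagic labeling together with a double-counting of edge labels.

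Let $f$ be the given local antimagic labeling, and let $X$ and $Y$ be the color classes with $f^+(v)=x$ for $v\in X$ and $f^+(v)=y$ for $v\in Y$.

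\emph{Bipartiteness.} Since $f^+$ is a proper coloring of $G$, adjacent vertices receive different induced labels. Only two labels occur, so no edge has both ends in $X$ or both ends in $Y$. Every edge therefore joins $X$ to $Y$, and $(X,Y)$ is a bipartition of $G$. Both classes are nonempty because $G$ is connected and has at least one edge.

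\emph{Double counting.} Because every edge has exactly one end in $X$, summing $f^+$ over $X$ counts each edge label exactly once:
\[
x|X| \;=\; \sum_{v\in X} f^+(v) \;=\; \sum_{e\in E} f(e) \;=\; 1+2+\cdots+q \;=\; \frac{q(q+1)}{2}.
\]
Here we use that $f$ is a bijection onto $\{1,\ldots,q\}$. The same argument applied to $Y$ gives $y|Y| = q(q+1)/2$.

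\emph{Comparing the class sizes.} From $x|X| = y|Y| > 0$ and $0<x<y$ we obtain $|X| = y|Y|/x > |Y|$. The induced labels are positive because every vertex has degree at least $1$ in a connected graph of order at least $3$.

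No step is a real obstacle. The only point needing care is noting that every edge contributes to exactly one vertex sum on each side, which is exactly what the bipartition established in the first step guarantees.
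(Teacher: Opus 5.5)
Your proof is correct and is the standard double-counting argument for this lemma, which the paper quotes from the literature without giving a proof. It then relies on the same identity $x|X|=y|Y|=m(m+1)/2$ in Lemma~\ref{lem-induced}. One small simplification: positivity of $x$ follows directly from $x|X|=q(q+1)/2>0$, so you do not need connectivity for the comparison $|X|=y|Y|/x>|Y|$.
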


%\nt Clearly, $2\le \chi(\th(a_1,a_2,\ldots,a_s)) \le 3$ and the lower bound holds if and only if  $a_1\equiv \cdots \equiv a_s  \pmod{2}$. By Lemma~\ref{lem-2part}, we immediately have the following lemma.

%\begin{lemma}\label{lem-th=2} For $s\ge 2$ and $1\le i\le s$, if $\chi_{la}((\th(a_1,a_2,\ldots,a_s)) =2$, then $a_i \equiv 0 \pmod{2}$. Otherwise, $\chi_{la}((\th(a_1,a_2,\ldots,a_s)) \ge 3$. \end{lemma}

\nt Throughout this paper, we shall use $a^{[n]}$ to denote a sequence of length $n$ in which all terms are $a$, where $n\ge 2$. For integers $1\le a < b$, we let $[a,b]$ denote the set of integers from $a$ to $b$. If $u$ and $v$ are two vertices of degree at least 3 in $G$, then an {\it induced $(u,v)$-path} (or just induced path) of $G$ is a path with end-vertices $u$ or $v$ and all other vertices are of degree 2 in $G$. If $u=v$, we may call it an {\it induced cycle}. For convenience, we shall also call induced cycle as induced path in this paper, whenever possible. % Interested readers may refer to~\cite{Arumugam+L+P+W, LauShiuNg-pendants, LauShiuSoo} for more results related to local antimagic chromatic number of graphs.

\ms\nt Let $\mathcal B_d$ be a class of bipartite graphs such that (i) every vertex of $G \in \mathcal B_d$ is of degree 2 except exactly $d$ vertices, say $v_1, v_2, \ldots, v_d$, $d\ge 1$, are of degree at least $3$, (ii) $\chi_{la}(G) = 2$, and (iii) every induced $(v_i,v_j)$-path $(1\le i,j\le d)$ (or cycle) of $G$ has even number of edges. Therefore, all the one-point union of cycles in~\cite[Theorem 2.4]{LSN-IJMSI} with local antimagic chromatic number 2 are in $\mathcal B_1$, while all the bridge graphs in~\cite[Theorem 2.1]{Lau+Shiu+Nal+Zhang+Prem}  are in  $\mathcal B_2$.

\ms\nt Recall that from the proof of~\cite[Theorem 2.1]{Lau+Shiu+Nal+Zhang+Prem}, every non $K_{2,s}$ graphs is constructed from $s\ge 4$ distinct paths by merging $s$ suitable end-vertices, no two of which belong to the same paths, to form two vertices of degree $s$. We note that if we can partition the labels of the end-edges into two sets with sum of elements equal $y$ such that at least a set contains at least two elements belong to the same path of length at least 4, and merge the corresponding end-vertices into two distinct vertices of degree $s$, then the graph such obtained will be a non-bridge graph in $\mathcal B_2$ with local antimagic chromatic number 2. Consequently, if the proof is not restricted to obtaining bridge graphs only, then we can get all the possible paths and a labeling that allow us to obtain all the graphs in $\mathcal B_2$. If we can find a systematic algorithm of getting all the required bipartitions of the end-edge labels, we then obtain all the graphs in $\mathcal B_2$. This will be discussed in another paper.
\ms\nt For $m\ge 2$, $i\ge 1$ and $n_i\ge 1$, let $B(n_1,n_2,\ldots,n_m)$ be the union of $K_{2,n_i}$ with bipartition $(X_i, Y_i)$, where $X_i=\{x_{i-1},x_i\}$, $Y_i=\{y_{i,1},y_{i,2},\dots, y_{i,n_i}\}$ and $x_m=x_0$. Note that the graph $B(n^{[3]})$ in~\cite[Theorem 3.23]{LSN-DMGT} is a graph in $\mathcal B_3$ with $\chi_{la}(B(n^{[3]})) = 2$.

\ms\nt In this paper, we extend the study to the determination of all the graphs in $\mathcal B_3$. In what follows, we only consider $G\not\cong B(n_1, n_2, n_3)$ for some $n_1, n_2, n_3\ge 1$.  This is an attempt to provide partial solutions to~\cite[Problem 4.1]{Lau+Shiu+Nal+Zhang+Prem}. We shall in another paper investigate the local antimagic chromatic number of $B(n_1,n_2,\ldots, n_m)$  \cite[Problem 3.26]{LSN-DMGT}.
%\nt In this section, we assume $\chi_{la}(\th_s)=2$. So by Lemma~\ref{lem-th=2}, $\th_s=\th(a_1, \dots, a_s)$ is bipartite and all $a_i$ are even. When $s=2$, $\th_s$ is a cycle, whose local antimagic chromatic number is 3. Thus $s\ge 3$.

\ms\nt For integers $i$ and $d$ and positive integer $s$, let $A_s(i;d)$ be the arithmetic progression of length $s$ with common difference $d$ and first term $i$. We first have a useful lemma.

\begin{lemma}[{\cite[Lemma~2.1]{Lau+Shiu+Nal+Zhang+Prem}}]\label{lem-AP}
Suppose $s, d\in\N$.
\begin{enumerate}[(a)]
\item For $i,j\in\Z$, the sum of the $k$-th term of $A_s(i;d)$ and that of $A_s(j;-d)$ is $i+j$ for $k\in[1,s]$; and the sum of the $k$-th term of $A_s(i;d)$ and the $(k-1)$-st term of $A_s(j;-d)$ is $i+j+d$ for $k\in[2,s]$.
\item If $0<|i_1-i_2|<d$, then $A_s(i_1; d)\cap A_s(i_2,\pm d)=\varnothing$.
\end{enumerate}
\end{lemma}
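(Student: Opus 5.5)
Both parts come straight from the definition $A_s(i;d)=(i,\,i+d,\,i+2d,\ldots,i+(s-1)d)$, so the $k$-th term is $i+(k-1)d$, while the $k$-th term of $A_s(j;-d)$ is $j-(k-1)d$.

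\emph{Part (a).} Adding the $k$-th terms gives $\bigl(i+(k-1)d\bigr)+\bigl(j-(k-1)d\bigr)=i+j$ for every $k\in[1,s]$. For the shifted sum with $k\in[2,s]$, the index $k-1$ lies in $[1,s-1]$, so the $(k-1)$-st term of $A_s(j;-d)$ exists and equals $j-(k-2)d$. Then
\[
\bigl(i+(k-1)d\bigr)+\bigl(j-(k-2)d\bigr)=i+j+d .
\]
This part is a one-line computation in each case.

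\emph{Part (b).} The idea is to compare residues modulo $d$. Every term of $A_s(i_1;d)$ is congruent to $i_1 \pmod d$. Every term of $A_s(i_2;\pm d)$, whichever sign is taken, is congruent to $i_2\pmod d$. A common element would therefore force $d\mid (i_1-i_2)$. But $0<|i_1-i_2|<d$, and no multiple of $d$ has absolute value strictly between $0$ and $d$. Hence the two progressions share no term, so $A_s(i_1;d)\cap A_s(i_2;\pm d)=\varnothing$.

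There is no real obstacle. The only points needing care are the index range in (a), which must start at $k\ge2$ so that the $(k-1)$-st term exists, and noting in (b) that the sign of the common difference does not change the residue class of the terms.
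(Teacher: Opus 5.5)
Your proof is correct: (a) is the direct computation $\bigl(i+(k-1)d\bigr)+\bigl(j-(k-1)d\bigr)=i+j$ and $\bigl(i+(k-1)d\bigr)+\bigl(j-(k-2)d\bigr)=i+j+d$. For (b), every term of $A_s(i_1;d)$ is congruent to $i_1$ and every term of $A_s(i_2;\pm d)$ is congruent to $i_2$ modulo $d$, and the hypothesis $0<|i_1-i_2|<d$ rules out $d\mid(i_1-i_2)$. The paper does not reprove this lemma (it is quoted from the earlier bridge-graph paper), so there is no argument of its own to compare against; yours is the routine verification the citation relies on.
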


\nt Suppose $A_1$ and $A_2$ are two sequences of length $n$. We combine these two sequences as a sequence of length $2n$, denoted $A_1\diamond A_2$, whose $(2i-1)$-st term is the $i$-th term of $A_1$ and the $(2i)$-th term is the $i$-th term of $A_2$, $1\le i\le n$.

%\newpage

\section{Main Result}

\ms\nt We extend the ideas used in proving Theorem 2.3 in~\cite{Lau+Shiu+Nal+Zhang+Prem}. For completeness, we include the details as much as possible. We shall keep the notation defined in Section 1. We refer to \cite{Bondy} for concepts and notation not defined in this paper.

\ms\nt For $d\ge 1$, let $\widetilde{\mathcal B}_d=\mathcal B_d\setminus \{B(n_1,\dots, n_d)\;|\; n_i\ge 1, 1\le i\le d\}$. Let $f$ be an edge labeling of $G\in \widetilde{\mathcal B}_d$ with $s\ge 2$ distinct induced paths (or cycles). We shall call the $2s$ edges incident to $v_i$, $1\le i\le d,$ as {\it end-edges} and the numbers assigned to the end-edges are called {\it end-edge labels}.

\begin{lemma}\label{lem-induced}  Suppose $G\in\widetilde{\mathcal B}_d$, $d\ge 1$, has size $m$. If $f$ is a local antimagic $2$-coloring of $G$ with induced vertex labels $x,y$, then % {\cyan and let $f$ be a local antimagic $2$-coloring of $G$ with induced vertex labels $x,y$. If $G$ is of size $m$ and has $s\ge 2$ distinct induced paths, then}
\begin{enumerate}[(a)]
\item $f^+(v_i)= y > x$ for $1\le i\le d$, %$y>x$;
\item $m\ge 3d+2$;
\item $x=m+1$ is odd and $y=m(m+1)/(m-2s+2d) \ge (2s^2+s)/d$;
\item all integers in $[1, y-x]$ are end-edge labels and $y/2$ is also an end-edge label if $y$ is even;
\item there exist $b\ge a\ge 2$ such that $a\in \{2i\;|\; 1\le i\le 2d\}$ with $a = y+4d-4s-1-t$, $b = y+4d-4s-1+t$, $a+b=2y+8d-8s-2$ and $ab = 8(2s^2-4sd+2d^2+s-d)$, where $t\ge 0$.
\item $m=(b+4s-4d)/2$.
\end{enumerate}
 \end{lemma}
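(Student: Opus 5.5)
The plan is to count the two colour classes exactly and then rewrite the single identity $y|Y|=m(m+1)/2$ from Lemma~\ref{lem-2part} as a quadratic in suitable parameters. Set $c=2s-2d$ and $D=m-2s+2d$.

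\emph{Part (a).} Every induced path has even length, and the $v_i$ are joined through induced paths because $G$ is connected. Hence all the $v_i$ lie in the same part of the bipartition. An induced path of length $2k$ has $k-1$ internal vertices in the part of the $v_i$ and $k$ internal vertices in the other part. Summing over the $s$ paths, and using $\sum 2k=m$, the part containing the $v_i$ has $d+m/2-s$ vertices and the other part has $m/2$ vertices. Counting degrees gives $2s=\sum\deg v_i\ge 3d$, so $s>d$. Thus the part containing the $v_i$ is the smaller one. By Lemma~\ref{lem-2part} it is $Y$, so $f^+(v_i)=y>x$. We record $|X|=m/2$ and $|Y|=m/2-s+d=D/2$.

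\emph{Parts (b) and (c).} For (b): since $G\not\cong B(n_1,\dots,n_d)$, some induced path has length at least $4$. Hence $m\ge 2s+2\ge 3d+2$. For (c): Lemma~\ref{lem-2part} gives $x=m(m+1)/(2|X|)=m+1$, which is odd because $m$ is even. It also gives $y=m(m+1)/(m-2s+2d)$. For the lower bound on $y$, add the labels at all the $v_i$. This gives $dy$ as the sum of the $2s$ distinct end-edge labels, which is at least $1+\cdots+2s=s(2s+1)$.

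\emph{Part (d).} Walk along an induced path starting from its end-edge at some $v_i$. Its edges $e_1,e_2,\dots$ satisfy $e_{2j}=x-e_{2j-1}$ at $X$-vertices and $e_{2j+1}=y-e_{2j}$ at internal $Y$-vertices. So odd-position labels increase by $y-x$, and even-position labels decrease by $y-x$ along the path.

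Suppose a label $\ell\le y-x$ sits on a non-end edge. If it is in an odd position, the preceding odd label is $\ell-(y-x)\le 0$, which is impossible. If it is in an even position and is not the last edge, the next even label would be $\le 0$, again impossible. Hence $\ell$ is an end-edge label.

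For $y/2$: each edge has exactly one endpoint in $Y$. If the edge labelled $y/2$ had an internal (degree-2) $Y$-endpoint, its other edge at that vertex would also be labelled $y/2$. This contradicts bijectivity, so the $Y$-endpoint is some $v_i$ and the label is an end-edge label.

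\emph{Parts (e) and (f).} Now $m=D+c$, so
\[
y=\frac{(D+c)(D+c+1)}{D}=D+2c+1+\frac{c(c+1)}{D}.
\]
Define $a=2c(c+1)/D$ and $b=2D$. Then
\[
ab=4c(c+1)=8(2s^2-4sd+2d^2+s-d),\qquad a+b=2(y-2c-1)=2y+8d-8s-2,
\]
and $t=(b-a)/2$ gives the stated forms of $a$ and $b$. Moreover $m=D+c=(b+4s-4d)/2$, which is (f), once we show $a\le b$ and identify $a$ correctly.

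The main obstacle is confirming that the smaller root is $a$ and that $a\in\{2,4,\dots,4d\}$. A direct computation gives $y-x=c+c(c+1)/D$, so $a=2(y-x-c)$ is an even integer. It is positive because $c(c+1)/D>0$.

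For the upper bound, use (d). Every integer in $[1,y-x]$ is one of the $2s$ end-edge labels, so $y-x\le 2s$. Therefore $a\le 2(2s-c)=4d$.

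Finally, $b=2D=4|Y|\ge 4d$ because $Y\supseteq\{v_1,\dots,v_d\}$. So $b\ge 4d\ge a\ge 2$, which completes (e) and (f).
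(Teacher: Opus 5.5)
Your proof is correct, under the same reading of $\widetilde{\mathcal B}_d$ that the paper uses (see the caveat below).

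\emph{Parts (a)--(d).} These match the paper in substance. You count the colour classes path by path instead of via the order $m-s+d$. In (d) you use the progressions along a path, whereas the paper simply notes that the $Y$-endpoint of a non-end edge labelled $\ell$ has degree $2$, so its other edge would need the label $y-\ell\ge x=m+1$.

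\emph{Parts (e)--(f).} Here your route is genuinely different. The paper treats $m(m+1)=y(m-2s+2d)$ as a quadratic in $m$ and writes the discriminant as $t^2$. It factors $(y+4d-4s-1)^2-t^2=8(s-d)(2s-2d+1)$ to define $a\le b$. It then gets $a\le 4d\le b$ from the identity $y-(2s^2+s)/d=-(a-4d)(b-4d)/(8d)$ and the bound in (c). Finally it needs $m\ge 2s+2$ to discard the root $m=(4s-4d+a)/2$.

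You set $b=2D=4|Y|$ and $a=2c(c+1)/D$ directly, so (f) holds by construction and no root has to be chosen. The cost is proving $b\ge a$, which you do by splitting at $4d$: $b\ge 4d$ is just $|Y|\ge d$, and $a\le 4d$ is exactly $y-x\le 2s$ from (d), via $a=2(y-x)-4s+4d$.

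That identity is the real payoff of your version. It shows $2s-(y-x)=2d-a/2$ counts the end-edge labels outside $[1,y-x]$. This is what the paper's Cases A--F rediscover one value of $a$ at a time: for $d=3$, none when $a=12$, only $y/2$ when $a=10$, $\gamma_1,\gamma_2$ when $a=8$, and so on. In exchange, the paper's route bounds $a$ from the label-sum inequality of (c) alone, without invoking (d).

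\emph{Caveat, applying equally to the paper.} Both arguments need some induced path of length at least $4$. You use it silently in (d)--(e): $y-x\le 2s$ requires $y-x<m$, which holds because a non-end-edge label exceeds $y-x$. The paper uses it at the same point in (d), and again for the root selection in (f).

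Your inference ``$G\not\cong B(n_1,\dots,n_d)$, hence some induced path has length at least $4$'' is not valid when the $n_i$ are required to be at least $1$. Take two copies of $K_{2,3}$ sharing a vertex. Label the three paths at one degree-$3$ vertex $(12,1),(10,3),(4,9)$ and at the other $(11,2),(8,5),(7,6)$, with the first label at the degree-$3$ end. This is a local antimagic $2$-colouring with $x=13$, $y=26$ and $m=2s=12$, and both (d) and (f) fail for it. So $\widetilde{\mathcal B}_d$ must be understood to exclude every graph whose induced paths all have length $2$.
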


\begin{proof} Note that $m$ must be even.  Thus, the sum of degrees of the vertices $v_i$, $1\le i\le d$, of degree at least 3 is $2s\ge 3d$ and $m\ge 2s+2\ge 3d+2$.  Moreover, $G$ has order $m-s+d$. Let $X$ and $Y$ be the set of vertices of $G$ with induced labels $x$ and $y$, respectively. Thus $(X,Y)$ is a bipartition of $G$. Without loss of generality, we may assume $f^+(v_i) = y$ for  $1\le i\in d$. Thus, $|X| = m/2$ and $|Y| = m/2 -s + d$. % {\cyan By the assumption, $v_i\in Y$ for all $i$.}
By Lemma~\ref{lem-2part}, we have $|X|\ne |Y|$, $y>x$ and $x|X| = y|Y| = m(m+1)/2$. Hence, $x = m+1$ is odd, and $y=m(m+1)/(m-2s+2d) \ge (1+2+\cdots + 2s)/d = (2s^2+s)/d$. %{\Lau Since $-2s+2d \le -d$, we get $m-2s+2d \le m-d < m$. Thus, $y > m+1 =x$.} %By Lemma~\ref{lem-2part}, $v_i\in Y$ for all $i\in [1,d]$. }

\ms\nt Suppose there is a non-end-edge $z_1z_2$ with $f(z_1z_2)=l$. Without loss of generality, we may assume $f^+(z_1)=x$ so that $f^+(z_2)=y$. Since $z_1z_2$ is not an end-edge, $z_2$ is of degree 2 and there is another vertex, say $z_3$, is adjacent to $z_2$. Thus, $f(z_2z_3)=y-l$. Since $1\le y - l \le m$, we have $l \ge y - m = y - x+1$. Thus, all integers in $[1, y-x]$ must be assigned to end-edges. Therefore, $y-x \le 2s$. Moreover, when $y$ is even, we have $y/2$ must be an end-edge label.

\ms\nt Solving $m(m+1) = y(m - 2s+2d)$ for $m$, we get $m^2 - (y-1)m + y(2s-2d) = 0$ so that $m = \frac{1}{2}[(y - 1) \pm \sqrt{y^2+(8d-8s-2)y+1}]$. Thus, $y^2 + (8d-8s-2)y+1 = t^2 \ge 0$, where $t$ is a nonnegative integer. This gives $(y+4d-4s-1)^2 - t^2 = (4d-4s-1)^2 -1$ or $(y+4d-4s-1-t)(y+4d-4s-1+t) = (4s-4d+2)(4s-4d)= 8(s-d)(2s-2d+1)$. Let $a = y+4d-4s-1-t$, $b = y+4d-4s-1+t$, we have $a+b=2y+8d-8s-2$ and $ab = 8(2s^2-4sd+2d^2+s-d)$ with $b\ge a> 0$. Clearly, both $a,b$ are even. Moreover, $y=4s-4d+1+\frac{a+b}{2}$.

\ms\nt Recall that $y - (2s^2+s)/d\ge 0$. Now,
\begin{align*}y -(2s^2+s)/d & = 4s-4d+1+\frac{a+b}{2}-\frac{2s^2+s}{d}\nonumber\\
& = \frac{a+b}{2} - \frac{2s^2-4sd+2d^2+s-d}{d}-2d=\frac{a+b}{2} -\frac{ab}{8d}-2d\nonumber\\
& = \frac{4ad+4bd-ab-16d^2}{8d} = -\frac{(a-4d)(b-4d)}{8d}. %\label{eq-y0}
\end{align*}
Thus, $a\in \{2i\;|\; 1\le i\le 2d\}$.

\nt Suppose $m=\di \frac{y-1-t}{2}=\frac{4s-4d+a}{2}\le \frac{4s}{2}=2s$. This is impossible since $m\ge 2s+2$. Thus $m=\di \frac{y-1+t}{2} =\frac{b+4s-4d}{2}$.
\end{proof}

\nt In what follows, we consider $d=3$. Thus, $y=4s-11+\frac{a+b}{2}$ and $ab=8(2s^2-11s+15)$. So
\begin{equation}\label{eq-y}\left.\begin{aligned}
b &=\frac{8(2s^2-11s+15)}{a},\\
 y& =4s-11+\frac{a}{2}+\frac{4(2s^2-11s+15)}{a},\\
m& =\frac{4(2s^2-11s+15)}{a}+2s-6.
\end{aligned}\right\}\end{equation}

 \nt For convenience, throughout this paper, we let
\[D'=\begin{cases}
 [1,y-x] & \mbox{ if $y$ is odd,}\\
 [1, y-x]\cup\{y/2\} & \mbox{ if $y$ is even,}\end{cases}\] which is a subset of the actual end-edge label set $D$.

\begin{theorem}\label{thm-iff} A graph $G$ with $s\ge 5$ induced paths (or cycles) is in $ \widetilde{\mathcal B}_3$ if and only if
\begin{table}[H]
\fontsize{9}{12}\selectfont
\begin{tabular}{l|l|l|l|l}
Case & $s$ & Induced paths & Induced paths & Induced path\\\hline
A-1 & $s=6l+3\ge 9$ & $l$ paths of length $4l+2$ & $5l+3$ paths of length $4l$ & not applicable \\
A-2 & $s=6l+1\ge 7$ & $3l$ paths of length $4l$ & $3l+1$ paths of length $4l-2$& not applicable \\\hline
B-1 & $s=5l\ge 10$ & $5l-1$ paths of length $4l-2$ & one path of length $2l-2$ & not applicable \\
B-2 & $s=5l+3\ge 8$ & $l$ paths of length $4l+2$ & $4l+2$ paths of length $4l$ & a path of length $2l$\\\hline
C$^*$ & $s=4l+3\ge 7$ & $l$ path of length $4l+2$ & $3l+2$ paths of length $4l$ & not applicable \\\hline
D-1$^*$ & $s=3l\ge 6$ & $l-1$ paths of length $4l-2$ & $2l-1$ paths of length $4l-4$ & a path of length $2l-2$\\
D-2$^*$ & $s=3l+1\ge 7$ & $3l-1$ paths of length $4l-2$ & one path of length $2l-2$ & not applicable\\\hline
E$^\dag$ & $s=2l+1\ge 5$ & $l-1$ paths of length $4l-2$ & $l$ paths of length $4l-4$ & not applicable \\\hline
F$^\dag$ & & $s-3$ path of length $4s-10$ & one path of length $2s-6$ & not applicable \\\hline
\end{tabular}\\
\normalsize
\nt $^*$ One of these paths is split into two paths.\\
\nt $^\dag$ Two of these paths are split into two paths respectively, or else one of these paths is split into three paths.
\caption{Exact value of $s$ and the corresponding number of paths and lengths.}\label{table-1}
\end{table}

\nt Moreover, the edge labels are unique (up to isomorphism) as follows:
\begin{table}[H]
\fontsize{10}{14}\selectfont
\begin{tabular}{l|l|l}
Case & Path length & Consecutive edge labels      \\\hline
A-1 & $4l+2$ & $A_{2l+1}(i;12l+6)\diamond A_{2l+1}(24l^2+14l+1-i; -12l-6)$, $1\le i\le l$ \\
 & $4l$  & $A_{2l}(2l+j; 12l+6)\diamond A_{2l}(24l^2+12l+1-j; -12l-6)$, $1\le j\le 5l+3$ \\\hline
A-2 & $4l$ & $A_{2l}(i; 12l+2)\diamond A_{2l}(24l^2-2l-1-i; -12l-2)$, $1\le i\le 3l$  \\
  & $4l-2$ &  $A_{2l-1}(6l+j; 12l+2)\diamond A_{2l-1}(24l^2-8l-1-j; -12l-2)$, $1\le j\le 3l+1$ \\\hline
B-1 & $4l-2$ & $A_{2l-1}(i; 10l-1) \diamond A_{2l-1}(20l^2-12l+1-i; -10l+1)$, $1\le i\le 5l-1$ \\
 & $2l-2$ & $A_{l-1}(10l-1; 10l-1)\diamond A_{l-1}(20l^2-12l+1-i; -10l-1)$ \\\hline
B-2 & $4l+2$ & $A_{2l+1}(i; 10l+5)\diamond A_{2l+1}(20l^2+12l+1-i; -10l-5)$, $1\le i\le l$ \\
  & $4l$ & $A_{2l}(2l+j; 10l+5)\diamond A_{2l}(20l^2+10l+1-j; -10l-5)$, $1\le j\le 4l+2$ \\
 & $2l$ & $A_{l}(6l+3;10l+5)\diamond A_{l}(20l^2+6l-2; -10l+5)$ \\\hline
C & $4l+2$ & $A_{2l+1}(i; 8l+4)\diamond A_{2l+1}(16l^2+10l+1-i; -8l-4)$, $1\le i\le l$ \\
 & $4l$ & $A_{2l}(2l+j; 8l+4)\diamond A_{2l}(16l^2+8l+1-j; -8l-4)$, $1\le j\le 3l+2$ \\\hline
D-1 & $4l-2$ &  $A_{2l-1}(i; 6l-3)\diamond A_{2l-1}((2l-1)(6l-5)-i; -6l+3)$, $1\le i\le l-1$\\
 & $4l-4$ & $A_{2l-2}(2l-2+j; 6l-3)\diamond A_{2l-2}((2l-1)(6l-5)-2l+2-j; -6l+3)$, $1\le j\le 2l-1$ \\
 & $2l-2$ & $A_{l-1}(4l-2; 6l-3)\diamond A_{l-1}(6l-7)(2l-1); -6l+3)$ \\\hline
D-2 & $4l-2$  & $A_{2l-1}(i; 6l-1)\diamond A_{2l-1}((2l-1)(6l-1)-i; -6l+1)$, $1\le i\le 3l-1$ \\
& $2l-2$ & $A_{l-1}(6l-1; 6l-1)\diamond A_{l-1}((2l-2)(6l-1); -6l+1)$ \\\hline
E & $4l-2$ & $A_{2l-1}(i; 4l-2)\diamond A_{2l-1}(8l^2-10l+3-i; -4l+2)$, $1\le i\le l-1$ \\
  & $4l-4$ & $A_{2l-2}(2l-2+j; 4l-2) \diamond A_{2l-2}(8l^2-12l+5-j; -4l+2)$, $1\le j\le l$  \\\hline
F & $4s-10$ & $A_{2s-5}(i; 2s-5) \diamond A_{2s-5}((2s-5)^2-i; -2s+5)$, $1\le i\le s-3$  \\
  & $2s-6$ & $A_{s-3}(2s-5; 2s-5) \diamond A_{s-3}((2s-6)(2s-5); -2s+5)$ \\\hline
\end{tabular}
\caption{Edge labels of each induced path.}\label{table-2}
\end{table}

\end{theorem}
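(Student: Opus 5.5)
Specialising Lemma~\ref{lem-induced} to $d=3$ reduces the problem to a finite amount of arithmetic. The plan is: first list the admissible parameter triples $(a,s,m)$, then show that the labels along each induced path are forced, and finally check sufficiency by writing down the labelings.

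\emph{Step 1: the parameters.} By Lemma~\ref{lem-induced}(e), $a\in\{2,4,6,8,10,12\}$, and $a$ must divide $8(2s^2-11s+15)=8(s-3)(2s-5)$. Since $b\ge a$, $m$ and $y$ are then given by (\ref{eq-y}). Integrality of $m$ and $y$ imposes congruence conditions on $s$, as follows.
\begin{itemize}
\item $a=2$: $m=4s-10$ and $y=(2s-5)^2$ for every $s$ (Case F).
\item $a=4$: $2s^2-11s+15$ must be even, which forces $s$ odd (Case E).
\item $a=6$: the condition is $3\mid 2(s-3)(2s-5)$, giving $s\equiv 0,1 \pmod 3$ (Cases D-1 and D-2).
\item $a=8$: $s\equiv 3\pmod 4$ (Case C).
\item $a=10$: $s\equiv 0,3\pmod 5$ (Cases B-1 and B-2).
\item $a=12$: $s\equiv 1,3\pmod 6$ (Cases A-1 and A-2).
\end{itemize}
In each case $x=m+1$ and $y$ come out as explicit quadratics in $l$, and these should match the common differences in Table~\ref{table-2}. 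For example, in Case F we get $y-x=(2s-5)^2-(4s-9)$. The constraint $b\ge a$ is what yields the lower bounds on $s$.

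\emph{Step 2: path structure.} Along an induced path, vertex sums alternate $x,y,x,\dots$. Hence the edge labels obey $e_{k+1}=x-e_k$ and $e_{k+2}=y-x+e_{k+1}$, so alternate edges form arithmetic progressions with common difference $\pm(y-x)$. Each path therefore has the form $A_r(i;y-x)\diamond A_r(j;-(y-x))$, as in Lemma~\ref{lem-AP}(a). Since both ends of a path sit at some $v_i$ with colour $y$, the path length is even, $2r$. I would then show that the starting terms must be exactly $D'$, together with one extra label $(y-x)$ or $y-x+$something in the cases marked by $^*$ and $^\dag$. This uses Lemma~\ref{lem-induced}(d): every label in $[1,y-x]$ must start an increasing progression. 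Since $y-x$ is close to $m/r$, the progressions $A_r(i;y-x)$ for distinct residues $i$ are disjoint by Lemma~\ref{lem-AP}(b). A counting argument then follows: the $s$ paths have total length $m$, which determines how many paths have each length (for instance $l$ long paths and $5l+3$ shorter ones in Case A-1).

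\emph{Step 3: which paths are split.} The end-edge labels at the three vertices $v_1,v_2,v_3$ must be partitioned into three classes, each summing to $y$. This is where a path may pass through the middle vertex $v_i$ instead of ending at it, producing the split paths in the $^*$ and $^\dag$ cases. I would compute the total of all end-edge labels, which is $3y$, and solve for the possible groupings. Exclusion of $B(n_1,n_2,n_3)$ (paths of length 2 between distinct $v_i$) removes degenerate groupings.

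\emph{Step 4: sufficiency.} Check directly that the listed sequences use each label in $[1,m]$ exactly once, and that the end-edge labels can be grouped into three sets each summing to $y$.

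I expect the main obstacle to be Step 2's uniqueness. The difficulty is proving that the starting labels of the short paths (the $2l+j$ terms and the single middle-length path) are forced rather than just possible. I would first try a residue-class argument modulo $y-x$: each residue class in $[1,m]$ must be covered by exactly one path's progressions, and this pins down both $i$ and the length of each path.
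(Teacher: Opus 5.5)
\textbf{There is a genuine gap in Steps 2--3: you never identify the end-edge labels that lie outside $D'$.} Without them, neither the length count nor the uniqueness claim goes through in Cases C--F.

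The paper's mechanism is to compare $D'$ with the full set of $2s$ end-edge labels, whose sum is $3y$.
\begin{itemize}
\item In Cases A and B, $|D'|=2s$, so $D=D'$.
\item In Cases C and D, $|D'|=2s-2$ and $\sigma(D')=2y$. So there are exactly two further end labels, with $\gamma_1+\gamma_2=y$.
\item In Cases E and F, $|D'|=2s-4$ and $\sigma(D')=y$. So there are four further labels, summing to $2y$.
\end{itemize}
The two end labels of any induced path of length $2r$ satisfy $\alpha_1+\beta_r=x-(r-1)(y-x)\le x<y$. Hence $\gamma_1$ and $\gamma_2$ lie on different paths $Q_1,Q_2$. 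In E and F, at most one path has both ends among the $\gamma$'s, since two such paths would force $2y\le 2x$.

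Because $\gamma_1+\gamma_2=y$, you can glue $Q_1$ and $Q_2$ at those ends. The result is an artificial path whose internal sums still alternate $x,y$ and whose two end labels lie in $D'$; in E-2 and F-2 the glued path is $Q_1QQ_2$. Only after this reduction can path lengths be bounded and counted. It is also exactly where the $^{*}$ and $^{\dagger}$ alternatives in Table~\ref{table-1} come from.

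Your description (``one extra label $(y-x)$ or $y-x+$something'') is therefore wrong. Your residue-class argument also fails as stated in these cases. A residue class mod $y-x$ splits into one run per end label in that class, so the classes of the $\gamma_i$ are covered by progressions from different induced paths. The gluing point is also not ``a path passing through'' one branch vertex: in the paper's $s=7$ example of Case C, $13\in D_u$ while $26\in D_w$.

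\textbf{Where the residue-class idea does work.} When $D=D'$, or after merging, it is a legitimate alternative to the paper's two-sided bound ($\beta_r\le y-x$ and $\beta_{r-1}\ge y-x+1$, since $\beta_{r-1}$ is not an end label). In Case A every residue class contains exactly one end label. Each class is then a single run starting from its least element, so a path's length is twice the size of its end-labels' residue class. The relation $\alpha_1+\beta_r=x-(r-1)(y-x)$ then forces the pairing. For instance, in A-1, $m=(12l+6)\cdot 2l+2l$, which gives length $4l+2$ on residues $[1,2l]$ and $4l$ on the rest. Case B needs an extra argument for the class containing $y/2$.

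\textbf{Other corrections.}
\begin{itemize}
\item Your Case F parameters are wrong. With $a=2$, \eqref{eq-y} gives $m=4s^2-20s+24$, $y=2(s-2)(2s-5)$, $x=(2s-5)^2$, and hence $y-x=2s-5$. Your $m=4s-10$ is the length of the long paths, and your $y$ is actually $x$. Your common difference $(2s-5)^2-(4s-9)$ contradicts Table~\ref{table-2}.
\item The recurrence should read $e_{k+2}=e_k+(y-x)$, not $e_{k+1}+(y-x)$.
\item For sufficiency in the $^{*}$ and $^{\dagger}$ cases, grouping the unsplit end labels is not enough. You must exhibit explicit splittings of a path at adjacent labels $\sigma,\,y-\sigma$, as in Lemma~\ref{lem-split} and the paper's splitting approaches.
\end{itemize}
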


\section{Necessity Proof of Theorem~\ref{thm-iff} }\label{necessity}

\begin{proof}  Let $G\in \widetilde{\mathcal B}_3$ with size $m$. Keep all notation defined in  Sections 1 and 2. Throughout the following proof, we let $u, v, w$ be the three vertices of degree at least 3. Without loss of generality, we may assume that $G$ has $s$ distinct induced paths or cycles. Since $2s=\deg(u)+\deg(v)+\deg(w) \ge 3d=9$, $s\ge 5$.
Thus, by Lemma~\ref{lem-induced}, $m\ge 12$. Moreover, $G$ has order $m-s+3$.

\ms\nt From Lemma~\ref{lem-induced}, we only need to consider $a\in \{2,4,6,8,10,12\}$. For each of the $a$, we shall in what follows, determine the exact values of $s$, $x$, $y$ and $m$ that give a unique way to label the edges of the induced paths bijectively such that sum of all the end-edge labels is $3y$ and all the degree two vertices have induced vertex labels $x,y,\ldots,x$.

\ms\nt The following claim is given in the proof of~\cite[Theorem 2.1]{Lau+Shiu+Nal+Zhang+Prem}. By symmetry, we always assume $\a_1 < \b_r$.

\ms\nt{\bf Claim:} {\it  Let $\phi$ be a labeling of a path $P_{2r+1}= v_1v_2\cdots v_{2r+1}$ with $\phi(v_{2i-1}v_{2i}) = \a_i$ and $\phi(v_{2i}v_{2i+1}) = \b_i$ for $1\le i\le r$. Suppose $\phi^+(v_{2j}) = x$ for $1\le j\le r$ and $\phi^+(v_{2k+1}) = y$ for $0\le k\le r$, where $y > x$, then $\a_1+\b_1=x$, $\{\a_1, \a_2, \ldots, \a_r\}$ is an increasing sequence with common difference $y-x$ while $\{\b_1, \b_2, \ldots, \b_r\}$ is a decreasing sequence with common difference $y-x$.  Moreover, these sequences are determined uniquely when the initial value $\a_1$ or $\b_r$ is given.}

\ms \nt Throughout this paper, we shall always use $\a_1$ and $\b_{r}$ as end-edge labels for a path of length $2r$, $r\ge 1$.

\ms\nt We shall consider 6 cases for $a=12,10,8,6,4,2$, respectively. Recall that $m$ is even and $m\ge 12$.

\ms\nt {\bf Case~A. }
 $a = 12$.  By \eqref{eq-y}, we have $m=\frac{1}{3}(2s^2-5s-3)$ and  $y = \frac{1}{3}(2s^2+s)$. Hence, $x=\frac{1}{3}(2s^2-5s)$.
 Since $m$ is even, $2s^2-5s-3\equiv 0 \pmod 6$. This is equivalent to $2s^2-5s-3\equiv 0 \pmod 3$ and $2s^2-5s-3\equiv 0 \pmod 2$. Thus,
 $s(s-1)\equiv 0 \pmod 3$ and $s\equiv 1 \pmod 2$. So we have $s\equiv 1, 3\pmod 6$.

\ms\nt Since $y-x = 2s$,  all integers in $[1,2s]$ are end-edge labels. Since there are $s$ induced paths, all end-edges are in $[1, 2s]$. Let $P$ be an induced path of length $2r$ with end-edge labels $\a_1  < 2s$ and $\b_r = \b_1 - (r-1)(y-x) = x-\a_1 - 2rs + 2s \le 2s$. So, \[2r \ge \frac{x-\a_1}{s} >  \frac{(2s^2-5s)/3 - 2s}{s} = \frac{2s-11}{3}.\] Note that $2r$ is even. Since $\b_r\ge 2$, we have $2r \le \frac{1}{s}(x - \a_1 + 2s - 2) < \frac{2s+1}{3}$. So
\begin{equation}
\frac{2s-11}{6} <  r <\frac{2s+1}{6}.\label{eq-rA}\end{equation}
Consider the following two subcases.

\begin{enumerate}[{A-}1.]
\item Suppose  $s=6l+3$ for some $l\ge 1$, then $m = 24l^2+14l$, $y=24l^2+26l+7$ and $x=24l^2+14l+1$.  From \eqref{eq-rA}, we have $2l-\frac{5}{6} <  r<2l+\frac{7}{6}$, i.e., $r=2l$ or $r=2l+1$.  Suppose $G$ has $h$ induced path(s) of length $4l+2$ and $6l+3-h$ induced path(s) of length $4l$. We now have $(4l+2)h + 4l(6l+3-h) = m = 24l^2+14l$. Therefore, $G$ has $h = l$ induced path(s) of length $4l+2$ and $5l+3$ induced paths of length $4l$.

Suppose an induced path of length $4l+2$ has $1\le \a_1=i < \b_{2l+1} = \b_1 - 2l(12l+6) = x-i - 2l(12l+6) = 24l^2+14l+1-i - 2l(12l+6) = 2l+1-i$. Thus, $i < (2l+1)/2$, i.e., $1\le i\le l$. Without loss of generality, we have the $i$-th induced path has consecutive edge labels $A_{2l+1}(i;12l+6)\diamond A_{2l+1}(24l^2+14l+1-i; -12l-6)$ in order. Note that as a set, $A_{2l+1}(24l^2+14l+1-i;-12l-6) = A_{2l+1}(2l+1-i;12l+6)$. Now, integers in $[1,2l]$ are end-edge labels and all the integers used are in $\bigcup^{2l}_{k=0} [(12l+6)k+1,(12l+6)k+2l]$. Note that, Lemma~\ref{lem-AP} guarantees that all sequences are disjoint. we will not mention this fact again in the following discussion.

\ms\nt Thus, an induced path of length $4l$ has $2l+1\le \a_1 = 2l+j < \b_{2l} = 24l^2+14l+1-(2l+j)-(2l-1)(12l+6) = 12l+7-j$. Thus, $2j < 10l +7$, i.e., $1\le j \le 5l + 3$. Without loss of generality, we have the $j$-th induced path has consecutive edge labels $A_{2l}(2l+j;12l+6)\diamond A_{2l}(24l^2+12l+1-j; -12l-6)$ in order. Note that as a set, $A_{2l}(24l^2+12l+1-j; -12l-6) = A_{2l}(12l+7-j; 12l+6)$. Now, integers in $[2l+1,12l+6]$ are end-edge labels and all the integers used are in $\bigcup^{2l-1}_{k=0}[(12l+6)k+2l+1,(12l+6)k+12l+6]$. Thus, we have obtained a bijective edge labeling of the paths.

\item Suppose  $s=6l+1$ for some $l\ge 1$. From \eqref{eq-rA}, $2l-\frac{3}{2}  < r < 2l+\frac{1}{2}$, i.e., $r=2l-1$ or $r=2l$. Similar to Subcase~A-1, $G$ has $3l$ induced paths of length $4l$, and $3l+1$ induced paths of length $4l-2$. Moreover, $y= (6l+1)(4l+1)$ and $x= 24l^2-2l-1$.

\ms\nt Suppose an induced path of length $4l$ has $1\le \a_1=i < \b_{2l} = 24l^2-2l-1-i-(2l-1)(12l+2) = 6l+1-i$. Thus, $i < (6l+1)/2$, i.e., $1\le i\le 3l$. Without loss of generality, we have the $i$-th induced path has consecutive edge labels $A_{2l}(i;12l+2)\diamond A_{2l}(24l^2-2l-1-i;-12l-2)$ in order. Note that as a set, $A_{2l}(24l^2-2l-1-i;12l+2) = A_{2l}(6l+1-i;12l+2)$. Now, integers in $[1,6l]$ are end-edge labels and all the integers used are in $\bigcup^{2l-1}_{k=0} [(12l+2)k+1,(12l+2)k+6l]$.

\ms\nt Thus, an induced path of length $4l-2$ has $6l+1\le \a_1 = 6l+j < \b_{2l-1} = 24l^2-2l-1-(6l+j) - (2l-2)(12l+2) = 12l +3-j$. Thus, $j <(6l+3)/2$, i.e., $1\le j\le 3l+1$. Without loss of generality, we have the $j$-th induced path has consecutive edge labels $A_{2l-1}(6l+j;12l+2)\diamond A_{2l-1}(24l^2-8l-1-j; -12l-2)$ in order. Note that as a set $A_{2l-1}(24l^2-8l-1-j; -12l-2) = A_{2l-1}(12l +3-j; 12l+2)$. Now, integers in $[6l+1,12l+2]$ are end-edge labels and all the integers used are in $\bigcup^{2l-2}_{k=0} [(12l+2)k+6l+1, (12l+2)k+12l+2]$.

\ms\nt Therefore, we have obtained a bijective edge labeling of the paths.
\end{enumerate}

\ms\nt {\bf Case~B. } $a = 10$. Now, $b=\frac{4}{5}(2s^2-11s+15)$. Since $b\ge a$, $2s^2-11s+15\ge \frac{25}{2}$. This implies that $s\ge 6$. By~\eqref{eq-y}, we have $y = \frac{4s^2-2s}{5}$, $m = \frac{4s^2-12s}{5}$ and hence $x = \frac{4s^2-12s+5}{5}$. Thus, $4s^2-2s\equiv 0\pmod 5$. This implies that $s^2\equiv 3s\pmod 5$ or equivalently $s\equiv 0, 3\pmod{5}$. Thus $s\ge 8$.

\ms\nt Now $y-x = 2s-1$. Since $y$ is even, $y/2=(2s^2-s)/5$ is assigned to an end-edge. Since $s\ge 8$, $(2s^2-s)/5>2s-1$. So all integers in $[1,2s-1] \cup \{y/2 = (2s^2-s)/5\}$ must be assigned to end-edges. Since there are $s$ induced paths, all end-edges are $[1, 2s-1]\cup \{(2s^2-s)/5\}$.
Thus, there are $s-1$ induced paths have both end-edge labels in $[1, 2s-1]$. Let $P_{2r+1}$ be one of these paths. Since $\a_1 < \b_r$, we have $\a_1 \in [1,2s-2]$. Now, $\b_r = x - \a_1 - (r-1)(y-x) \le 2s-1 = y-x$. Since $x = \frac{4s^2-12s+5}{5}$ and $\a_1\le 2s-2$, we have
\[\frac{(2s-10)(2s-1)}{5} +1 = \frac{4s^2-22s+15}{5}\le x - \a_1 \le r(y-x) =  r(2s-1).\]
Thus, $r > \frac{2s-10}{5} > 1$, i.e., $r\ge 2$. Hence, $\b_{r-1}$ is a non-end-edge label so that $\b_{r-1} = x-\a_1 - (r-2)(y-x)\ge 2s$. Therefore
\[(r-2)(2s-1) \le x - \a_1 - 2s \le  (4s^2-22s)/5 = (2s-10)(2s-1)/5 - 2 < (2s-10)(2s-1)/5.\]
Consequently, $r-2 < (2s-10)/5$. Combining the above, we have
\begin{equation} 2s/5 - 2 < r < 2s/5.\label{eq-rB}\end{equation}
\begin{enumerate}[{B-}1.]
\item Suppose $s=5l$ for some $l\ge 2$. From \eqref{eq-rB}, $r=2l-1$. Thus there are $s-1=5l-1$ induced paths of length $4l-2$. Therefore, the remaining path must have length $m - (s-1)(4s/5 - 2) = 2l - 2$.

Note that $y=20l^2-2l$, $x=20l^2-12l+1$ and $y-x=10l-1$. Suppose an induced path of length $4l-2$ has $1\le \a_1=i <\b_{2l-1} =  20l^2-12l+1-i-(2l-2)(10l-1) = 10l-1-i$. Thus, $1\le i \le 5l-1$. Without loss of generality, we have the $i$-th induced path has consecutive edge labels $A_{2l-1}(i; 10l-1)\diamond A_{2l-1}(20l^2-12l+1-i;-10l+1)$ in order. Note that as a set, $A_{2l-1}(20l^2-12l+1-i;-10l+1)=A_{2l-1}(10l-1-i;10l-1)$. Now, integers in $[1,10l-2]$ are end-edge labels and all the integers used are in $\bigcup^{2l-2}_{k=0}[(10l-1)k+1,(10l-1)k+10l-2]$.

Consequently, the induced path of length $2l-2$ has consecutive edge labels $A_{l-1}(10l-1;10l-1)\diamond A_{l-1}(20l^2-22l+2;-10l+1)$. As a set, $A_{l-1}(20l^2-22l+2;-10l+1)=A_{l-1}(10l^2-l;10l-1)$. Thus, the integers used are in $\bigcup^{l-2}_{k=0}\{(10l-1)k+10l-1,(10l-1)k+10l^2-l\}$.

Thus, we have obtained a bijective edge labeling of the paths.

\item Suppose $s=5l+3$ for some $l\ge 1$. Now, $y=20l^2+22l+6$, $x=20l^2+12l+1$ and  $y-x=10l+5$.

Let $P$ be the induced path of length $2t$ with end-edge label $y/2=10l^2+11l+3$.   Therefore, another end-edge label is $\alpha_1\in[1, 2s-1]=[1, 10l+5]$. Thus $\beta_t=10l^2+11l+3=x-\alpha_1 -(t-1)(10l+5)$.
    So we have $1\le \alpha_1=10l^2+l-2-(t-1)(10l+5)\le 10l+5$. By solving this inequality, we have $t=l$ only. Hence, we have $\alpha_1=6l+3$. This path has consecutive edge labels $A_{l}(6l+3; 10l+5)\diamond A_{l}(20l^2+6l-2;-10l-5)$ in order. As a set,
    $A_{l}(20l^2+6l-2; -10l-5)=A_{l}(10l^2+11l+3; 10l+5)$.  Now, all the integers used are in\\ $\{(10l+5)k+6l+3, (10l+5)k+10l^2+11l+3\;|\; 0\le k\le l-1\}=\{(10l+5)k+6l+3\;|\; 0\le k\le 2l-1\}$.
Thus the end-edge labels of other induced paths are in $[1, 10l+5]\setminus\{6l+3\}$.

From \eqref{eq-rB}, we have $r=2l$ or $r=2l+1$. Let $h$ be the number of paths of length $4l$. Then there are $5l+2-h$ induced paths of length $4l+2$.
Thus, $m=(2l)+h(4l)+(5l+2-h)(4l+2)$ that gives $h=4l+2$. Hence, we have one induced path of length $2l$, $4l+2$ induced paths of length $4l$ and $l$ induced paths of length $4l+2$.

\nt
Suppose an induced path of length $4l+2$ has $1\le \a_1=i <\b_{2l+1} = 20l^2+12l+1-i-(2l)(10l+5) = 2l+1-i$. Thus, $1\le i \le l$.
Without loss of generality, we have the $i$-th induced path has consecutive edge labels $A_{2l+1}(i; 10l+5)\diamond A_{2l+1}(20l^2+12l+1-i;-10l-5)$ in order. Note that as a set, $A_{2l+1}(20l^2+12l+1-i;-10l-5)=A_{2l+1}(2l+1-i;10l+5)$.  Now, integers in $[1,2l]$ are end-edge labels and all the integers used are in $\bigcup^{2l}_{k=0}[(10l+5)k+1,(10l+5)k+2l]$.

Suppose an induced path of length $4l$ has $2l+1\le \a_1= 2l+j <\b_{2l} = 20l^2+12l+1-(2l+j)-(2l-1)(10l+5) = 10l+6-j$. Thus, $j < 4l+3$.
Without loss of generality, we have the $j$-th induced path has consecutive edge labels $A_{2l}(2l+j; 10l+5)\diamond A_{2l}(20l^2+10l+1-j;-10l-5)$ in order, where $1\le j\le 4l+2$. Note that as a set, $A_{2l}(20l^2+10l+1-j;-10l-5)=A_{2l}(10l+6-j;10l+5)$. Now, integers in $[2l+1,10l+5]\setminus\{6l+3\}$ are end-edge labels and all the integers used are in $\bigcup^{2l-1}_{k=0}\big\{[(10l+5)k+2l+1,(10l+5)k+10l+5]\setminus\{(10l+5)k+6l+3\}\big\}$.

Thus, we have obtained a bijective edge labeling of the paths.
\end{enumerate}

\ms\nt {\bf Case~C. } $a=8$.  Now, $b = 2s^2-11s+15$. By \eqref{eq-y}, we have $y = \frac{2s^2-3s+1}{2}$. Hence, $s$ is odd. Let $s=2k+1$. By \eqref{eq-y}, we have $m=4k^2-3k-1$. Since $m$ is even, $k=2l+1$ for some $l$. So $s=4l+3$. Note that, since $s\ge 5$, $l\ge 1$. Hence $m=16l^2+10l$, $y=(2l+1)(8l+5)$, $x=(2l+1)(8l+1)$.
Thus, all integers in $[1,8l+4]$ are end-edge labels. Note that $1+2+\cdots + (8l+4)  = 2y$. Thus, two other end-edge labels, say $\g_1$ and $\g_2$, must sum to $y = \g_1 + \g_2$.

\ms\nt Suppose $\g_1$ and $\g_2$ are end-edge labels of the same path of length $2q$. Without loss of generality, assume $\a_1=\g_1$ and $\b_q=\g_2$ so that $y=\a_1+\b_q = \a_1+(x-\g_1)-(q-1)(y-x)$ which implies that $q=0$, a contradiction. Thus, $\g_1$ and $\g_2$ are labeled at distinct  induced paths. Therefore, there are $s-2=4l+1$ paths with both end-edge labels in  $D'=[1,8l+4]$ and exactly two  induced paths, say $Q_i$, with an end-edge label in $D'$ and another end-edge label   $\g_i > 8l+4$, $i=1,2$.

\ms\nt  Let $u_i$ be the end vertex of $Q_i$ incident to the edge with label $\g_i$, $i=1,2$. By identifying $u_1$ with $u_2$, we get a path,  denoted $Q_1Q_2$, with end-edge labels in $[1,8l+4]$.  Note that, since $\g_1+\g_2=y$,  for  every internal vertex of $Q_1Q_2$, the induced vertex labels are $x$ and $y$ alternative like all other induced paths. Whenever necessary, we shall call it an {\it artificial induced path}.  Otherwise, we shall still call it an induced path for simplicity.  Thus, artificially $G$ contains $4l+2$ induced paths with end-edge labels in $D'$.

%Thus, the length of $Q_1Q_2$ must be $4l$ or $4l+2$.

%{\Shiu Thus, $G$ contains $4l+1$ induced paths with end-edge labels in $D'$ and two induced paths with an end-edge label in $D'$ and the other is not, for each.}

\ms\nt  Let $P_{2r+1}$ be a path with both end-edge labels in $[1,8l+4]$. Since $1\le \a_1 < \b_r \le 8l+4=y-x$, we have $\b_r = x-\a_1 - (r-1)(y-x) \le y-x$. So,
\[(2l+1)(8l+1)-(8l+4) <  x-\a_1 \le r(y-x) = r(8l+4)=4r(2l+1).\]
Thus, $r>2l-\frac{3}{4}$, i.e., $r\ge 2l$.

\nt Since $\b_{r-1}$ is a non-end-edge label, we have $\b_{r-1} = (x-\a_1)-(r-2)(y-x) \ge 8l+5$. Now,
\[(r-2)(8l+4) \le x-\a_1 - 8l-5 \le (2l+1)(8l+1)-1- 8l-5=16l^2+2l-5.\]
Thus $r-2\le 2l-\frac{6l+5}{8l+4}<2l$.  Therefore, $r=2l$ or $r=2l+1$.
%
%
%\ms\nt Assume $Q_1Q_2$ has length $4l$, and there are $h$ paths of length $4l+2$, then $4l + h(4l+2)+(4l+1-h)(4l) = m= 16l^2+10l$ so that  $h = l$. Therefore, in this case, there are $3l+1$ paths of length $4l$ and $Q_1Q_2$ is a path of length $4l$. Overall, there are $l$ path(s) of length $4l+2$, and $3l+2$ of length $4l$ with one of it will be split appropriately to get a $Q_1$ and a $Q_2$.
%
%\ms\nt Assume $Q_1Q_2$ has length $4l+2$, and there are $h'$ paths of length $4l$, then $(4l+2)+h'(4l) + (4l+1-h')(4l+2)=16l^2+10l$ so that $h'=3l+2$. Therefore, in this case, there are $3l+2$ path of length $4l$, and $l-1$ paths of length $4l+2$ and $Q_1Q_2$ is a path of length $4l+2$. Overall, there are $3l+2$ paths of length $4l$, and $l$ path(s) of length $4l+2$ with one of it will be split appropriately to get a $Q_1$ and a $Q_2$.

%\ms\nt Combining the above, we may begin with $l$ induced path(s) of length $4l+2$ and $3l+2$ induced paths of length $4l$ such that one of them must be split appropriately to get a $Q_1$ and a $Q_2$. Now, all end-edge labels of these $4l+2$ paths are in $[1, 8l+4]$ and vice versa.

\ms\nt Suppose an induced path of length $4l+2$ has $1\le \a_1=i <\b_{2l+1} = 16l^2+10l+1 - i - (2l)(8l+4) = 2l+1-i $. Thus, $2i < 2l+1$. Hence $1\le i\le l$.  Without loss of generality, the $i$-th path has consecutive edge labels $A_{2l+1}(i; 8l+4)\diamond A_{2l+1}(16l^2+10l+1 - i; -8l-4)$. Note that as a set, $A_{2l+1}(16l^2+10l+1 - i; -8l-4) = A_{2l+1}(2l+1 -i ; 8l+4)$. Now, integers in $[1,2l]$ are end-edge labels and all the integers used are in $\bigcup^{2l}_{k=0}[(8l+4)k+1,(8l+4)k+2l]$.

\ms\nt Thus, an induced path of length $4l$ has $2l+1\le \a_1 = 2l+j < \b_{2l} = 16l^2+10l+1 - (2l+j) - (2l-1)(8l+4) = 8l + 5 - j$. Thus, $j < (6l+5)/2$. Hence $1\le j\le 3l+2$. Without loss of generality, the $j$-th induced path has consecutive edge labels $A_{2l}(2l+j; 8l+4) \diamond A_{2l}(16l^2+8l+1 - j; -8l-4)$. Note that as a set,  $A_{2l}(16l^2+8l+1 - j; -8l-4)=A_{2l}(8l+5-j; 8l+4)$.  Now, integers in $[2l+1,8l+4]$ are end-edge labels and all the integers used are in $\bigcup^{2l-1}_{k=0}[(8l+4)k+2l+1,(8l+4)k+8l+4]$.

\ms\nt Therefore, we have obtained a bijective edge labeling of the paths.  Consequently, $G$ contains $l$ induced paths of length $4l+2$ and $3l+2$ induced paths of length $4l$. One of these paths is  the artificial induced path $Q_1Q_2$.

\ms\nt {\bf Case~D. } $a=6$. By \eqref{eq-y}, we have $y = \frac{4s^2-10s+6}{3}$, $m=\frac{4s^2-16s+12}{3}$ and $x=\frac{4s^2-16s+15}{3}$. Hence, $s\equiv 0, 1\pmod{3}$. It is easy to verify that $m$ and $y$ are even. Now $y-x = 2s-3$. Clearly, $y/2=\frac{2s^2-5s+3}{3}\ge 2s-2$. Thus, all the integers in $D'=[1,2s-3]\cup\{\frac{2s^2-5s+3}{3}\}$ are end-edge labels. Note that $1+2+\cdots + (2s-3) + y/2 = (8s^2-20s+12)/3 = 2y$. Thus, two other end-edge labels, say $\g_1$ and $\g_2$, must sum to $y=\g_1+\g_2$.
By an argument similar to that in Case~C, we know $\g_1$ and $\g_2$ are labeled at distinct paths. Let $Q_i$ be such an induced path with end-edge label $\g_i$, $i=1,2$. Therefore, another end-edge label of $Q_i, i=1,2,$ must be in $D'$. Since $\g_1+\g_2=y$, similar to Case~C, $Q_1Q_2$ is an induced path with end-edge labels in $D'$.
Consequently, $G$ is constructed by using $s-1$ induced paths with end-edge labels in $D'$ in which one of them is an artificial induced path that must be split appropriately to get $Q_1,Q_2$. 

\ms\nt Let $P_{2r+1}$ be a path with both end-edge labels in $[1,2s-3]$. Since $1\le \a_1 < \b_r \le 2s-3$, we have $\b_r = x-\a_1 - (r-1)(y-x) \le 2s-3$. So,
\[\frac{(2s-3)(2s-8)}{3}=\frac{4s^2-22s+24}{3}=x-(2s-3)< x-\a_1\le r(2s-3).\]
Thus $r> \frac{2s-8}{3}$. Since $\b_{r-1}$ is a non-end-edge label, we have $\b_{r-1} = (x-\a_1) - (r-2)(y-x) \ge 2s-2$. Now \[(r-2)(2s-3) \le x-\a_1 - 2s+2\le \frac{4s^2-22s+18}{3}<\frac{(2s-3)(2s-8)}{3}.\] Thus, $r-2< \frac{2s-8}{3}$. Consequently, $\frac{2s-8}{3}<r<\frac{2s-2}{3}$.
Suppose $s=3l\ge 6$. We have $r=2l-2, 2l-1$. Suppose $s=3l+1\ge 7$. We have $r=2l-1$.

\begin{enumerate}[D-1.]
\item Suppose $s=3l$, where $l\ge 2$. Note that, $m=4(3l-1)(l-1)$, $x=(2l-1)(6l-5)$, $y=2(3l-1)(2l-1)$, all the integers in $[1, 6l-3]\cup\{(3l-1)(2l-1)\}$ are end-edge labels.

\begin{enumerate}[a)]
\item Suppose an induced path of length $4l-2$ has  $1\le \a_1 < \b_{2l-1} = (2l-1)(6l-5) - \a_1 - (2l-2)(6l-3) = 2l-1-\a_1$. Thus, $2\a_1 < 2l-1$, i.e., $1\le \a_1\le l-1$. So we get that $\a_1\in [1, l-1]$ and $\b_{2l-1}\in [l, 2l-2]$. Hence there are at most $l-1$ induced paths of length $4l-2$.

\item Suppose an induced path of length $4l-4$ has $\b_{2l-2}\le 6l-3$. Therefore, $\a_1=x-(2l-3)(6l-3)-\b_{2l-2}=8l-4-\b_{2l-2}\ge 8l-4-6l+3=2l-1$. On the other hand, $\a_1<\b_{2l-2}=8l-4-\a_1$. Thus $\a_1<4l-2$, i.e., $\a_1\le 4l-3$.

    So, $\a_1\in[2l-1, 4l-3]$ and hence $\b_{2l-2}\in[4l-1, 6l-3]$. Hence there are at most $2l-1$ induced paths of length $4l-4$.
\end{enumerate}
\nt Thus, there is another path whose end-edge labels are $4l-2$ and $(3l-1)(2l-1)$. We can obtain that this path is of length $2l-2$ and the sequence of edge labels is $A_{l-1}(4l-2; 6l-3)\diamond A_{l-1}((6l-7)(2l-1); -6l+3)$.  Note that as a set, $A_{l-1}((6l-7)(2l-1); -6l+3) = A_{l-1}((3l-1)(2l-1);6l-3)$. Now, all the integers used are in $\{3(2l-1)k+(4l-2) \mid 0\le k\le 2l-3\}$.

\nt Since there are $3l-1$ paths under our consideration, there are exactly $l-1$ paths of length $4l-2$, $2l-1$ paths of length $4l-4$ and one path of length $2l-2$. Thus, we need to split one of the above paths appropriately to get $Q_1$ and $Q_2$. Without loss of generality, the $i$-th induced path, $1\le i\le l-1$, of length $4l-2$ has consecutive edge labels $A_{2l-1}(i; 6l-3) \diamond A_{2l-1}((2l-1)(6l-5)-i; -6l+3)$. Note that as a set, $A_{2l-1}((2l-1)(6l-5)-i; -6l+3) = A_{2l-1}(2l-1-i; 6l-3)$. Now, all the integers used are in $\bigcup^{2l-2}_{k=0}[(6l-3)k+1, (6l-3)k+2l-2]$. Moreover, the $j$-th induced path, $1\le j\le 2l-1$, of length $4l-4$ has consecutive edge labels in $A_{2l-2}(2l-2+j; 6l-3) \diamond A_{2l-2}((2l-1)(6l-5)-2l+2-j; -6l+3)$. Note that as a set, $A_{2l-2}((2l-1)(6l-5)-2l+2-j; -6l+3) = A_{2l-2}(6l-2-j; 6l-3)$. Now, all the integers used are in $\bigcup^{2l-3}_{k=0}[(6l-3)k+2l-1, (6l-3)k+4l-3] \cup \bigcup^{2l-3}_{k=0}[(6l-3)k+4l-1, (6l-3)k+6l-3]$. Together with the edge labels of the induced path of length   $2l-2$, we have obtained a bijective edge labeling of the $3l-1$ paths.

\item Suppose $s=3l+1$, where $l\ge 2$. Note that, $m=4l(3l-2)$, $x=(2l-1)(6l-1)$, $y=2l(6l-1)$, all integers in $[1, 6l-1]\cup\{l(6l-1)\}$ are end-edge labels. By a similar argument as Case~D-1, we obtain that there are at most $3l-1$ induced paths of length $4l-2$ with both end-edge labels $[1, 6l-2]$.  So there are exactly $3l-1$ induced paths of length $4l-2$ with both end-edge labels $[1, 6l-2]$. Thus, there is another path whose end-edge labels are $6l-1$ and $l(6l-1)$. We can obtain that this path is of length $2l-2$ and the sequence of edge labels  is  $A_{l-1}(6l-1;6l-1) \diamond A_{l-1}((2l-2)(6l-1); -6l+1)$.  Note that as a set, $A_{l-1}((2l-2)(6l-1); -6l+1) = A_{l-1}((l-1)(6l-1); 6l-1)$. Now, all the integers used are in $\{(6l-1)k+(6l-1) \mid 0\le k\le 2l-3\}$.

\nt Therefore, there are exactly $3l-1$ paths of length $4l-2$ and one path of length $2l-2$. Thus, we need to split one of the above paths appropriately to get $Q_1$ and $Q_2$. Without loss of generality, the $i$-th induced path, $1\le i\le 3l-1$, of length $4l-2$ has consecutive edge labels $A_{2l-1}(i; 6l-1) \diamond A_{2l-1}((2l-1)(6l-1)-i; -6l+1)$. Note that as a set, $A_{2l-1}((2l-1)(6l-1)-i; -6l+1) = A_{2l-1}(6l-1-i; 6l-1)$. Now, all the integers used are in $\bigcup^{2l-2}_{k=0}[(6l-1)k+1, (6l-1)k+6l-2]$. Together with the edge labels of the induced path of length $2l-2$, we have obtained a bijective edge labeling of the $3l$ paths.
\end{enumerate}

\ms\nt {\bf Case~E. } $a=4$.  By~\eqref{eq-y}, we have $y = 2s^2-7s+6$, $m=2s^2-9s+9$ and $x=2s^2-9s+10$. Hence, $y-x=2s-4$. Since $m$ is even, we have $s$ is odd. Thus, all integers in $[1,2s-4]$ are end-edge labels. Note that $1+2+\cdots+(2s-4) = y$. Thus, four other end-edge labels, say $\g_1, \g_2, \g_3, \g_4$, must sum to $2y$.

\ms \nt Now, let us consider induced paths with end-edge label $\g_i$, $1\le i\le 4$. Suppose there are two induced paths $R_1$ and $R_2$ with length $2r_1$ and $2r_2$, respectively, such that their end-edge labels are $\g_i$, $1\le i\le 4$.

\nt Without loss of generality, assume $\g_1$ and $\g_2$ are end-edge labels for $R_1$ and  $\g_3$ and $\g_4$ are end-edge labels for $R_2$, where $\g_1<\g_2$ and $\g_3<\g_4$. Thus,
\begin{align*}
\g_1+\g_2 & = \g_1+(x-\g_1)-(r_1-1)(y-x)=x-(r_1-1)(y-x)\le x\\
\g_3+\g_4 & = \g_3+(x-\g_3)-(r_2-1)(y-x)=x-(r_2-1)(y-x)\le x.
\end{align*}
Now we get $2y=\g_1+\g_2+\g_3+\g_4\le 2x$ which is impossible.
Thus, there is at most one induced path with both end-edge labels are $\g_i$'s.

\ms\nt Let $P_{2r+1}$ be an induced path with both end-edge labels in $[1,2s-4]$.  Since $1\le \a_1 < \b_r\le 2s-4$, we have $\b_r = x-\a_1 - (r-1)(y-x) \le 2s-4$. So, \[(2s-4)(s-4) =2s^2-12s +16 < 2s^2-11s+14 <  x-\a_1 \le r(2s-4).  \] Thus, $r > s-4$, i.e., $r\ge s-3$. Since $\b_{r-1}$ is a non-end-edge label, we have $\b_{r-1} = (x-\a_1) - (r-2)(y-x) \ge 2s-3$. Now, \[(r-2)(2s-4) \le x-\a_1-2s+3\le  2s^2-9s+10 - 1 -2s+3 = 2s^2 -11s + 12 < 2s^2-10s+12 = (2s-4)(s-3).\] Thus, $r < s-1$, i.e., $r\le s-2$. Therefore, $r\in \{s-2,s-3\}$. Write $s=2l+1$, $l\ge 2$.

\begin{enumerate}[a)]
\item Suppose an induced path of length $4l-2$ has $\a_1\ge 1$. Thus, $1\le \a_1 < \b_{2l-1} = 2(2l+1)^2-9(2l+1)+10 - \a_1 - (2l-2)(4l-2) = 2l-1-\a_1$. Thus, $2\a_1 < 2l-1$, i.e., $1\le \a_1\le l-1$. So we get that $\a_1\in [1, l-1]$ and $\b_{2l-1}\in [l, 2l-2]$. Hence there are at most $l-1$ induced paths of length $4l-2$.

   Without loss of generality, the $i$-th path has consecutive edge labels $A_{2l-1}(i; 4l-2) \diamond A_{2l-1}(8l^2-10l+3-i;-4l+2)$. Note that as a set, $A_{2l-1}(8l^2-10l+3-i; -4l+2) = A_{2l-1}(2l-1-i; 4l-2)$. Now, integers in $[1,2l-2]$ are end-edge labels and all the integers used are in $\bigcup^{2l-2}_{k=0} [(4l-2)k+1,(4l-2)k+2l-2]$.

\item Suppose an induced path of length $4l-4$ has $\b_{2l-2}\le 4l-2$. Thus, $\a_1 =x-(2l-3)(4l-2)-\b_{2l-2}=3(2l+1)-6-\b_{2l-2}\ge 6l-3-4l+2=2l-1$. Now, $\a_1<\b_{2l-2}=6l-3-\a_1$. Thus $2\a_1<6l-3$, i.e., $\a_1\le 3l-2$. So, $\a_1\in[2l-1, 3l-2]$ and hence $\b_{2l-2}\in[3l-1, 4l-2]$. Hence there are at most $l$ induced paths of length $4l-4$.

    Without loss of generality, the $j$-th path has consecutive edge labels $A_{2l-2}(2l-2+j; 4l-2)\diamond A_{2l-2}(8l^2-12l+5-j; -4l+2)$. Note that as a set, $A_{2l-2}(8l^2-12l+5-j; -4l+2) = A_{2l-2}(4l-1-j; 4l-2)$. Now, integers in $[2l-1,4l-2]$ are end-edge labels and all the integers used are in $\bigcup^{2l-3}_{k=0}[(4l-2)k+2l-1,(4l-2)k+4l-2]$.
\end{enumerate}
We have obtained a bijective edge labeling of $2l-1$ paths using all the integers in $[1,m]$. However, $G$ has $2l+1$ induced paths. So some of these paths are not induced paths of $G$. Therefore, one (or two) such path(s) must be split to become three (or four) induced paths (or cycles) of $G$. Let us consider the following two subcases.
\begin{enumerate}[E-1.]
\item Suppose there is no induced path with both end-edge labels are $\g_i$'s. Let $Q_i$ be such a path with length $2r_i$, $1\le i\le 4$. Note that, another end-edge label must lie in $D'=[1, 4l-2]$.

    By the uniqueness of the labeling of the path, $Q_i$ must be a proper subpath of an induced path, say $P$, with both end-edge labels in $D'$. Thus, the path $P-Q_i$ obtain by removing all edges of $Q_i$ and all isolated vertices of the residue subgraph, is another $Q_{j}$. Thus, we have two artificial induced paths. Hence, artificially $G$ contains $2l-1$ induced paths whose end-edge labels are in $D'$.

    Thus, $\g_1,\ldots,\g_4$ must be end-edge labels of 4 paths obtained by splitting two paths of length $4l-2$ or $4l-4$ into 2 paths each.
\item Suppose there is one induced path $Q$ with both end-edge labels are $\g_3, \g_4$, say. Let $Q_j$ be the induced paths with one end-edge label $\g_j$ and the other end-edge label lying in $D'$, $j=1,2$.

We have found the $2l-1$ induced paths that used up all the labels, yet we need two more induced paths. So these two must come from splitting an existing path, which is now an artificial induced path $P$, into 3 induced paths. Thus, this artificial induced path is of the form $Q_1QQ_2$. Without loss of generality, $\g_1$ and $\g_3$ (also, $\g_4$ and $\g_2$) are adjacent edge labels in $P$. So $\g_1+\g_3=\g_4+ \g_2=y$.
\end{enumerate}

\ms\nt {\bf Case~F. } $a=2$. By~\eqref{eq-y}, we have $y=4s^2-18s+20 = 2(s-2)(2s-5)$ is even, $m=4s^2-20s+24$ and $x=4s^2-20s+25=(2s-5)^2$. Hence, $y-x=2s-5$. Thus, all integers in $D'=[1,2s-5]\cup \{(s-2)(2s-5)\}$ are end-edge labels. Note that $1+2+\cdots+(2s-5) = y/2$. Thus, four other end-edge labels, say $\g_1, \g_2, \g_3, \g_4$, must sum to $2y$. By the same argument as in Case~E, there is at most one induced path with both end-edge labels are $\g_i$'s.

\begin{enumerate}[a)]
\item Let $P_{2r+1}$ be a path with both end-edge labels in $[1,2s-5]$. Since $1\le \a_1 < \b_r \le 2s-5$, we have $\b_r = x-\a_1-(r-1)(y-x) \le 2s-5$. So,
\[(2s-6)(2s-5) = 4s^2-22s+30 < x-\a_1 \le r(y-x) = r(2s-5). \] Thus, $r > 2s-6$. Since $\b_{r-1}$ is a non-end-edge label, we have $\b_{r-1} = (x-\a_1) - (r-2)(y-x) \ge 2s-4$. Now,
\[(r-2)(2s-5) \le x -\a_1-2s+4 \le  4s^2 - 22s + 28 < (2s-6)(2s-5).\] Thus, $r-2 < 2s-6$. Therefore, $r=2s-5$.

\ms\nt Suppose an induced path of length $4s-10$ has $1\le \a_1 = i < \b_{2s-5} = 4s^2-20s+25 - i-(2s-6)(2s-5) = 2s - 5 - i$. Thus, $i < (2s-5)/2$, i.e., $1\le i\le s-3$. So there are at most $s-3$ induced paths of length $4s-10$ with $\a_1\in[1, s-3]$ and $\b_{2s-5}\in[s-2, 2s-6]$.

Without loss of generality, the $i$-th induced path has consecutive edge labels $A_{2s-5}(i; 2s-5) \diamond A_{2s-5}((2s-5)^2-i; -2s+5)$, $1\le i\le s-3$. Note that as a set, $A_{2s-5}((2s-5)^2-i; -2s+5) = A_{2s-5}(2s-5-i; 2s-5)$.  All the integers used are in $\bigcup^{2s-6}_{k=0}[(2s-5)k+1, (2s-5)k+2s-6]$.

\item Suppose $T$ is a path with end-edge label $2s-5$. Since $2s-5=\min\{2s-5, (s-2)(2s-5), \g_1, \g_2, \g_3, \g_4\}$, $2s-5$ is the first edge label of $T$. By uniqueness, the sequence of edge labels of $T$ must be a subsequence of $A_{s-3}(2s-5; 2s-5)\diamond A_{s-3}((2s-6)(2s-5); -2s+5)$. Note that as a set, $A_{s-3}((2s-6)(2s-5); -2s+5) = A_{s-3}((s-2)(2s-5); 2s-5)$. All the integers used are in $\{(2s-5)(k+1) \;|\; 0\le k \le 2s-7\}$. Thus, the possible case is either there is a path, say $T_0$, of length $2s-6$ with edge labels sequence $A_{s-3}(2s-5; 2s-5)\diamond A_{s-3}((2s-6)(2s-5); -2s+5)$ or there are two paths, say $T_1$ and $T_2$, with an end-edge label $2s-5$ and $(s-2)(2s-5)$, respectively. For the last case, $T$ is split appropriately to get $T_1$ and $T_2$. So we may consider $T$ as an artificial induced path of length $2s-6$.
\end{enumerate}
Now, artificially $G$ contains $s-3$ induced paths of length $4s-10$ and one induced path of length $2s-6$. We have now obtained a bijective edge labeling of these $s-2$ paths.
\begin{enumerate}[F-1.]
\item Suppose there is no induced path with both end-edge labels are $\g_j$'s. Let $Q_j$ be such a path with length $2r_j$, $1\le j\le 4$.
    Note that, another end-edge label must lie in $D'$. By a similar argument as in Case~E, $Q_1, Q_2, Q_3, Q_4$ are obtained by splitting two paths of length $4s-10$ or $2s-6$ into two paths respectively.

\item Suppose there is one induced path $Q$ with both end-edge labels are $\g_3, \g_4$, say. Let $Q_j$ be the induced paths with one end-edge label $\g_j$ and the other end-edge label in $D'$, $j=1,2$. By a similar argument as in Case~E, $Q_1, Q_2, Q$ are obtained by splitting a paths of length $4s-10$ or $2s-6$ into three paths, where $Q$ is the middle path.
\end{enumerate}
From the above, we have Table~\ref{table-1} and Table~\ref{table-2}.
\end{proof}

\section{Sufficiency Proof of Theorem~\ref{thm-iff}}

\nt  We keep all notation defined in Sections~1, 2 and 3. Suppose $P$ is an induced $(v_1,v_2)$-path of length $2r$. Recall that, we have used $\a_1$ and $\b_r$ to denote its end-edge label incident to $v_1$ and $v_2$, respectively, where $\a_1<\b_r$.
We shall use an order pair $(\a_1, \b_r)_{2r}$ to represent $P$. We also use $(a_1, a_2)$ to denote an induced path with end-edge labels $a_1, a_2$ without specifying  the  labels adjacent to $a_1,a_2$.

\ms\nt For a set $A$ of integers, we let $\sg(A)$ be the sum of elements in $A$, then
$\sg(D_u)=\sg(D_v)=\sg(D_w)=y$ and no two entries in the same partite set belong to length two induced path. Clearly, $\deg(u)=|D_u|$, $\deg(v)=|D_v|$ and $\deg(w)=|D_w|$.  Therefore, a solution  is obtained if and only if such a partition exists and a complete characterization is available if and only if all such partitions are found. However, it is well known that such a partitioning is an NP-hard problem. Therefore, in this section, we showed the necessary conditions are also sufficient.
%\cyan m-1

\begin{lemma}\label{lem-split} Suppose $G\in \widetilde{\mathcal B}_3$. Let $D^*=\{x-\e \;|\;\e\in D'\}=\{m+1-\e\;|\; \e\in D'\}$. Suppose $\sg \in [1,m]\setminus (D'\cup D^*)$. There is exists an induced path $P$ of length at least $6$ with an edge label $\sg$. Moreover, there is another label, say $\t$, that is adjacent to $\sg$ in $P$ such that
\begin{enumerate}[(a)]
\item $\sg+\t=y$,
\item $P$ can be split into two paths $Q_1$ and $Q_2$ of even length, and
\item $(Q_1, Q_2) =((\a_1, \sg), (\t,\b_r))$ or $((\a_1, \t), (\sg,\b_r))$.
\end{enumerate}

\nt Further, every induced path $P$ of length at least $8$ contains an edge with label $\sg$, and can be split into two paths $Q_1$ and $Q_2$ of even length at least $4$, with $(Q_1, Q_2) =((\a_1, \sg), (\t,\b_r))$ or $((\a_1, \t), (\sg,\b_r))$.
\end{lemma}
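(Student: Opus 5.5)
We will rely on the structure obtained in the necessity proof (Section~\ref{necessity}): in each of Cases A--F, the edge labels along every (possibly artificial) induced path are given by the Claim. They are $\a_1,\b_1,\a_2,\b_2,\ldots,\a_r,\b_r$ with $\a_k=\a_1+(k-1)(y-x)$, $\b_k=x-\a_1-(k-1)(y-x)$, and consecutive labels alternately sum to $x$ (at the degree-$2$ vertices coloured $x$) and to $y$ (at the vertices coloured $y$). By Lemma~\ref{lem-induced}(d), every end-edge label of a genuine induced path lies in $D'$ or is one of the extra labels $\g_i$. The first step is to identify the labels adjacent to end-edges. If $\e\in D'$ is an end-edge label (say $\a_1=\e$), then the label next to it is $\b_1=x-\e\in D^*$. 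So the labels in $D'\cup D^*$ are exactly the end-edge labels of the artificial paths and their neighbours.

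Now let $\sg\in[1,m]\setminus(D'\cup D^*)$. Because the labeling is bijective onto $[1,m]$, $\sg$ lies on some artificial induced path $P$ from Table~\ref{table-2}. It is neither an end label of $P$ nor the second or second-to-last label of $P$. Hence $P$ has at least $5$ edges, and since the length is even, $P$ has length at least $6$.

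Writing $\sg$ in the $\a$- or $\b$-sequence, we choose $\t$ to be the neighbour of $\sg$ across a $y$-coloured vertex:
\begin{itemize}
\item if $\sg=\a_k$ with $k\ge2$, take $\t=\b_{k-1}$;
\item if $\sg=\b_k$ with $k\le r-1$, take $\t=\a_{k+1}$.
\end{itemize}
Then $\sg+\t=y$ by the Claim (equivalently, by Lemma~\ref{lem-AP}(a) with the shift $d=y-x$), which gives (a). We must check that the chosen index exists; this holds because $\sg$ is not an end label, so $\sg\ne\a_1,\b_r$. Next, cut $P$ at the vertex $z$ between $\t$ and $\sg$, that is, replace $z$ by two copies. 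Each piece then starts and ends with a $y$-coloured vertex, so it has even length. This gives (b), and the end-edge pairs are $(\a_1,\t)$ and $(\sg,\b_r)$, or the reverse order, which gives (c). Since $z$ has $f^+(z)=y$ and $\sg+\t=y$, identifying the two copies recovers $P$. This is exactly the artificial-path mechanism of Cases C--F, so the split is consistent with the labeling.

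For the second part, let $P$ have length $2r\ge8$, so that $r\ge4$. We exhibit $\sg=\a_k$ for some $2\le k\le r-1$, chosen so that the piece $(\a_1,\ldots,\b_{k-1})$ has length $2(k-1)\ge4$ and the piece $(\a_k,\ldots,\b_r)$ has length $2(r-k+1)\ge4$. Taking $k=3$ is enough when $r\ge4$.

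The main obstacle is verifying that $\a_3\notin D'\cup D^*$, uniformly over Cases A--F. The plan for this check is as follows.
\begin{itemize}
\item $\a_3=\a_1+2(y-x)>y-x$, so $\a_3\notin[1,y-x]$.
\item $\a_3\ne y/2$: the label $y/2$ appears only as the end label of the special short path (Cases B-2, D, F), or as in Case B-1 where it lies in a different residue class modulo $y-x$. By Lemma~\ref{lem-AP}(b), the residue classes modulo $y-x$ separate these sequences.
\item $\a_3\notin D^*$: $D^*$ consists of the $\b_1$-values of the paths and of $x-y/2$. Since $\a_3\equiv\a_1\pmod{y-x}$ while $x-\e\equiv x-\e$, the congruence $\a_3\equiv x-\e$ together with the size bound $\a_3<x-(y-x)$ rules out equality.
\end{itemize}
I expect this last comparison to need a short case-by-case check against the explicit progressions in Table~\ref{table-2}. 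I would do it by comparing residues modulo $y-x$ and reading off the intervals listed in the necessity proof, for example $\bigcup_k[(12l+6)k+1,(12l+6)k+2l]$ in Case A-1.
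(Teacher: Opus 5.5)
Your argument is essentially the paper's. You identify $D'\cup D^*$ with the labels on end-edges and end-adjacent edges of the (artificial) paths of Table~\ref{table-2}, so $\sigma$ sits at an interior position of a path of length at least $6$. You then take $\tau=y-\sigma$ across the neighbouring $y$-vertex and cut there. Your choice $\sigma=\alpha_3$, $\tau=\beta_2$ for $2r\ge 8$ is exactly the paper's ``edge at distance at least three from both end-edges.''

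The one step you flag as the main obstacle is not an obstacle, and the residue check you sketch should be dropped.

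\begin{itemize}
\item \textbf{The needed fact is immediate.} In your first paragraph you already observed that $D'\cup D^*$ is precisely the set of labels carried by end-edges and end-adjacent edges. The labeling is injective. For $2r\ge 8$, $\alpha_3$ lies on the fifth edge of $P$, which is neither an end-edge nor adjacent to one. Hence $\alpha_3\notin D'\cup D^*$ at once, with no case analysis.
\item \textbf{Your sketch does not prove it as written.} The congruence line ``$x-\epsilon\equiv x-\epsilon$'' is vacuous. The bound $\alpha_3<x-(y-x)$ only excludes the values $x-\epsilon$ with $\epsilon\le y-x$. In Cases B, D and F we have $y/2>y-x$, so $x-y/2<x-(y-x)$ and the size bound says nothing about it. For example, in Case B-1 with $l=3$, $x-y/2=58$ while $\alpha_3\in[59,72]$, so only a residue argument would separate them, and you would need one separately in each case.
\end{itemize}

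Replace that final block with the one-line injectivity argument and the proof is complete.
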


\begin{proof}  Let $P$ be an induced path of $G$ such that $\sg$ is labeled at $e\in E(P)$. Since $\sg \notin D\cup D^*$, it is not assigned to any end-edges and their neighbors. Thus, $e$ is adjacent to edges with label $y-\sg$ and $x-\sg$, respectively but not adjacent to any end-edges. Since the length of $P$ is even, $2r\ge 6$.   %  such that both $y-\sg$ and $x-\sg$ are not in $D' \cup D^*$.

\ms\nt Denote $y-\sg$ by $\t$, we have (a). Let $e'$ be the edge in $P$ with label $\t$. Let $z$ be the common vertex of $e$ and $e'$. Split $P$ into two paths $Q_1$ and $Q_2$ with $z$ as their common end-vertex. We have (b). Without loss of generality, we may assume $\a_1$ is an end-edge label of $Q_1$. The other end-edge label is either $\sg$ or $\t$. We have (c).

\ms\nt Further, if $P$ is of length $2r\ge 8$ with end-edge labels $\a_1,\b_r$, then we can let $e$ be an edge which is a distance at least three from both end-edges of $P$. Split $P$ into two paths $Q_1$ and $Q_2$ with $z$ as their common end-vertex, we have $Q_1$ and $Q_2$ are of even length at least 4, with $(Q_1, Q_2) =((\a_1, \sg), (\t,\b_r))$ or $((\a_1, \t), (\sg,\b_r))$.   \end{proof}

\begin{rem} When $s\ge 6$. $|[1,m]\setminus (D\cup D^*)|=m-2s$. From \eqref{eq-y}, we can see $m-2s\ge 1$.  From the proof of Theorem~\ref{thm-iff}, we know $s=5$ if and only if $a=2,4$. So by \eqref{eq-y} $m-2s=4s^2-22s+24=14$ or $m-2s = 2s^2-11s+9 = 4$. Thus $\sg$ always exists.
 Moreover,  if we choose $\sg \in D^*$, then $\t = y-\sg$ is a non-end-edge label. Thus,  either $Q_1$ or $Q_2$ is of length 2.
\end{rem}

\nt For graphs $G, H$, let $G+H$ denote the disjoint union of $G$ and $H$. For $c\ge 2$, $cG$ is the disjoint union of $c$ copies of $G$.  For $t\ge 2$ and $3\le a_1\le a_2\le \cdots\le a_t$, denote by $C(a_1,a_2,\ldots,a_t)$ the {\it one point union of $t$ cycles} of order $a_1, a_2,\ldots,a_t$, respectively \cite{LSN-IJMSI}.

\ms\nt We now present the sufficiency proof of Theorem~\ref{thm-iff}.

%\ms\nt Before showing some examples, we introduce some graphs and their corresponding notation.  A graph consisting of $t$ paths joining two vertices is called a {\it $t$-bridge graph}, which is denoted by $\th(a_1,\dots, a_t)$, where $t\ge 2$ and $1\le a_1\le a_2 \le \cdots \le a_t, a_2\ge 2$ are the lengths of the $t$ paths \cite{Lau+Shiu+Nal+Zhang+Prem, LSZPN}.  For $t\ge 2$ and $3\le a_1\le a_2\le \cdots\le a_t$, denote by $C(a_1,a_2,\ldots,a_t)$ the {\it one point union of $t$ cycles} of order $a_1, a_2,\ldots,a_t$, respectively \cite{LSN-IJMSI}.
\begin{proof}  Suffice to show the existence of a solution for all possible $s$ in each case of Section~3.

\ms\nt{\bf Case~A-1. }
In this case, we have $s=6l+3$, $l\ge 1$, $y=24l^2+26l+7=(2l+1)(12l+7)$ and $D'=[1, 12l+6]=D$.  There are $l$ induced paths of length $4l+2$, namely $(i, 2l+1-i)$, $1\le i\le l$; and $5l+3$ paths of length $4l$, namely $(2l+j, 12l+7-j)$, $1\le j\le 5l+3$.

\begin{example} Take the smallest $s=9$, i.e., $l=1$. We have $y = 57$, $x = 39$, $m=38$ and  $D=[1, 18]$. There is an induced path of length 6 and eight induced paths of length 4. Following are their representative order pairs and the corresponding consecutive edge labels:

\centerline{$\begin{array}{lll}(1,2): 1,38,19,20,37,2; & (3,18): 3,36,21,18;  & (4,17): 4,35,22,17;\\ (5,16): 5,34,23,16;  & (6,15): 6,33,24,15;  &  (7,14): 7,32,25,14; \\ (8,13): 8,31,26,13; & (9,12): 9,30,27,12; & (10,11): 10,29,28,11.\end{array}$}

\nt We may choose $D_u=\{1,5,11,7,15,18\}$,  $D_v=\{2,16,10,12,13,4\}$ and $D_w= \{14,6,3,9,8,17\}$. It is easy to see that $\sg(D_u)=\sg(D_v)=\sg(D_w)=57$. So it corresponds to a required solution $G$. Note that, in $G$, $\deg(u)=\deg(v)=\deg(w)=6$ and there are no induced cycles. \rsq
\end{example}

\ms\nt For a general $l\ge 1$, if we request $|D_u|=|D_v|=|D_w|=4l+2$, then we may let
\begin{align*} D_u & =\{1+6t, 6+6t \;|\; 0\le t\le 2l\}\\
D_v &= \{2+6t, 5+6t \;|\; 0\le t\le 2l\}\\
D_w &=\{3+6t, 4+6t\;|\; 0\le t\le 2l\}.\end{align*} Clearly,  $\sg(D_z) = (7+12l)(2l+1) = y$ for each $z\in\{u,v,w\}$.
Thus, we get a (possibly disconnected graph) solution such that vertices $u$, $v$, $w$ have equal degree $4l+2$.

\ms\nt{\bf Case A-2. }
Now $s=6l+1$, $l\ge 1$ and $y=(6l+1)(4l+1)$ with $D'=[1, 12l+2]=D$.  There are $3l$ induced paths of length $4l$,  namely $(i, 6l+1-i)$, $1\le i\le 3l$; and $3l+1$ induced paths of length $4l-2$, namely $(6l+j, 12l+3-j)$, $1\le j\le 3l+1$.
\begin{example} Suppose $l=1$, i.e., $s=7$.
There are four induced paths of length 2 and three induced paths of length 4. Their consecutive edge labels are given by

\centerline{$1,20,15,6;\quad  2,19,16,5; \quad  3,18,17,4;\quad  7,14; \quad 8,13; \quad 9,12;\quad 10,11.$}

\nt Note that integers $6+i$ and $15-i$ $(1\le i \le 4)$ must not be in the same partite set.  We let
$D_u= \{1,8,10,7,9\}$, $D_v=\{6,13,11,2,3\}$ and $D_w=\{14,12,5,4\}$. Hence, we get a required solution which is a biregular graph.\rsq
\end{example}

\ms\nt We now give three solutions for each $l\ge 2$. Let
\begin{align*}
D_u & = [1,l+1]\cup[5l,6l]\cup[6l+1,7l]\cup[11l+3,12l+2],\\
D_v & = [l+2,2l+2]\cup[4l-1,5l-1]\cup[7l+1,8l]\cup[10l+3,11l+2],\\
D_w & = [2l+3,4l-2]\cup[8l+1,10l+2].
\end{align*}
One may easy check that $\sg(D_z)=(6l+1)(4l+1)$, $z\in\{u,v,w\}$ and $|D_u|=|D_v|=2l+1$, $|D_w|=2l-1$. The corresponding graph is $2C((4l-2)^{[l]}, (4l)^{[l+1]})+C((4l-2)^{[l+1]}, (4l)^{[l-2]})$.

\ms\nt Suppose we swap the end-edge label $8l+1$ in $D_w$ with end-edge labels $4l$ and $4l+1$ in $D_v$. This gives the second solution which is $C((4l-2)^{[l]},(4l)^{[l+1]})+H$, where $H$ is a connected graph containing three induced $(v,w)$-paths, namely $(2l, 4l+1)_{4l}$, $(2l+1, 4l)_{4l}$, $(8l+1,10l+2)_{4l-2}$.

\ms\nt From the resulting partite sets, swap the end-edge label $10l+3$ in $D_v$ with end-edge labels $5l+1$ and $5l+2$ in $D_u$. This gives the third solution: a connected graph. Moreover, comparing with the second solution, this graph contains three more induced $(u,v)$-paths, namely $(l, 5l+1)_{4l}$, $(l-1, 5l+2)_{4l}$ and $(10l+3, 8l)_{4l-2}$.

\ms\nt{\bf Case~B-1. } Now, $s=5l$, $l\ge 2$ with  $D'=[1, 10l-1]\cup\{l(10l-1)\}=D$ and $y=2l(10l-1)$. There are $5l-1$ induced paths of length $4l-2$, namely $(i, 10l-1-i)$, $1\le i\le 5l-1$; and one induced path of length $2l-2$, namely $(10l-1, l(10l-1))$. %{\Lau [Looks like we are short of solution for $s=5$.]}

\begin{example}

\ms\nt Suppose $l=2$, i.e., $s=10$. We get $y = 76$ and the end-edge label set is $[1,19]\cup\{38\}$.
 Let  $D_u=\{1,4,10,11,12,38\}$, $D_v=\{7,9,13,14,16,17\}$, $D_w=\{2,3,5,6,8,15,18,19\}$. The corresponding graph is a 2-connected graph without induced cycle.

\ms\nt If we swap $1,4$ in $D_u$ with $5$ in $D_w$, then we get a 2-connected graph with two induced cycles incident to $w$, namely $(1,18)$ and $(4,15)$.
\rsq
\end{example}

\ms\nt We now give three solutions for each $l\ge 3$. Let
\begin{align*}
D_u & = [1,2l]\cup[8l-1,10l-2],\\
D_v & = [2l+1,4l]\cup [6l-1,8l-2],\\
D_w & = [4l+1,6l-2]\cup\{10l-1,10l^2-l\}.
\end{align*}
It is routine to check that $\sg(D_z)=l(20l-2)$, $z\in\{u,v,w\}$ and $|D_u|=|D_v|=4l$, $|D_w|=2l$.
The corresponding graph is $2C((4l-2)^{[2l]})+C((2l-2), (4l-2)^{[l-1]})$.

\ms\nt Suppose we swap  $10l-2$ in $D_u$ with $4l+1$ and $6l-3$ in $D_w$. This gives the second solution which is $H+C((4l-2)^{[2l]})$, where $H$ is a connected graph containing three induced $(w,u)$-paths, namely $(10l-2,1)_{4l-2}$, $(6l-2,4l+1)_{4l-2}$ and $(4l+2, 6l-3)_{4l-2}$.

\ms\nt From the resulting partite sets, swap $10l-3$ in $D_u$ with $2l+1$ and $8l-4$ in $D_v$. This gives the third solution: a connected graph. Moreover, comparing with the second solution, this graph contains three more induced $(u,v)$-paths, namely $(2,10l-3)_{4l-2}$, $(2l+1, 10l-2)_{4l-2}$ and $(8l-4, 2l+3)_{4l-2}$.

\ms\nt{\bf Case~B-2. } Now, $s=5l+3$, $l\ge 1$. We get  $y=2(5l+3)(2l+1)$ and  $D'=[1,10l+5]\cup\{(5l+3)(2l+1)\}=D$.
There are $l$ induced paths of length $4l+2$, namely $(i, 2l+1-i)$, $1\le i\le l$; $4l+2$ induced paths of length $4l$, namely $(2l+j, 10l+6-j)$, $1\le j\le 4l+2$; and one induced path of length $2l$, namely $(6l+3, 10l^2+11l+3)$.

\begin{example}
\ms\nt Suppose $l=1$, i.e., $s=8$. We get $y=48$. The edge labels of the paths are
\[\begin{array}{llll}
1,32,16,17,31,2; & 3,30,18,15; & 4,29,19,14; & 5,28,20,13; \\
6,27,21,12; & 7,26,22,11; & 8,25,23,10; & 9,24.
\end{array}\]

\nt We can have $D_u = \{1,4,5,9,14,15\}$, $D_v=\{2,10,11,12,13\}$, $D_w =\{3,6,7,8,24\}$ or else $D_u = \{1,4,5,6,8,9,15\}$, $D_v = \{2,10,11,12,13\}$, $D_w = \{3,7,14,24\}$. The first solution contains one $(u,u)$-induced cycle, namely $(4,14)_4$; and the second does not contain any induced cycle.

\ms\nt Suppose $l=2$, i.e., $s=13$. We get $m=104$, $y = 130$. The edge labels of the paths are
\[\begin{array}{lll}
1, 104, 26, 79, 51, 54, 76, 29, 101, 4; & 2, 103, 27, 78, 52, 53, 77, 28, 102, 3; & 5, 100, 30, 75, 55, 50, 80, 25;\\
6, 99, 31, 74, 56, 49, 81, 24; & 7, 98, 32, 73, 57, 48, 82, 23; & 8, 97, 33, 72, 58, 47, 83, 22;\\
9, 96, 34, 71, 59, 46, 84, 21; & 10, 95, 35, 70, 60, 45, 85, 20; & 11, 94, 36, 69, 61, 44, 86, 19;\\
12, 93, 37, 68, 62, 53, 87, 18; & 13, 92, 38, 67, 63, 42, 88, 17; & 14, 91, 39, 66, 64, 41, 89, 16;\\
15, 90, 40, 65.\end{array}\]
We can have $D_u = [1,8]\cup [22,25]$, $D_v = [10,14]\cup [16,19]$ and $D_w = \{9,15,20,21,65\}$.  The corresponding graph contains 12 induced cycles and one induced $(v,w)$-path, $(10,20)_8$.\rsq
\end{example}

\ms\nt Suppose $l\ge 3$.  Let \begin{align*}
D_u & = [1,l]\cup[9l+5,10l+4]\cup\{2l-1,2l,2l+4, (5l+3)(2l+1)\}\\
D_v &= [l+1,2l-2]\cup[2l+5, 3l+1]\cup[7l+4,9l+4]\cup\{2l+2,2l+3,10l+5\}\\
D_w & = \{2l+1\} \cup [3l+2,7l+3].\end{align*} It is routine to check that $\sg(D_z) = y$ for $z\in\{u,v,w\}$. Note that $|D_u| = 2l+4$, $|D_v| = 4l-1$ and $|D_w| = 4l+3$.
  %\\ $[l+1,2l-1]\cup [2l+3,9l+4]\cup \{10l+5\}$, {\cyan $[2l+3,8l+2] \cup [8l+3,8l+5] \cup [l+1,2l-1]\cup [8l+6,9l+4] \cup \{10l+5\}$} and $D_w = \{2l,2l+1,2l+2\}\cup [9l+5,10l+4]$

\ms\nt{\bf Case~C. }
Now, $s=4l+3$, $l\ge 1$. We get $y=(2l+1)(8l+5)$,   $D=[1, 8l+4]\cup\{\g_1, \g_2\}$ and $D'=[1, 8l+4]$, where $\g_2> \g_1>8l+4$ and $\g_1+\g_2=y$. There are $l$ induced paths of length $4l+2$ and $3l+2$ induced paths of length $4l$. We first choose an induced path and split it into two appropriate paths $Q_1,Q_2$ with end-edge labels $\g_1$ and $\g_2$, respectively. We then partition $D$ to obtain $D_u, D_v, D_w$ to get  a graph in $\widetilde{\mathcal B}_3$.

\ms \nt We first show that there is a solution for $s=7, 15$.

\begin{example}
\ms\nt Consider $s=7$, i.e., $l=1$. We have $y=39$. The only induced path of length 6 must have edge labels $1, 26, 13, 14, 25, 2$. We split it into $1, 26$ and $13, 14, 25, 2$. There are also five induced paths of length 4. Their consecutive edge labels are as follows.
\[\begin{array}{lllllll}
1, 26; & 2, 25, 14, 13; & 3, 24, 15, 12; & 4, 23, 16, 11; & 5, 22, 17, 10; & 6, 21, 18, 9; & 7, 20, 19, 8. \end{array}\]
Thus, we can have $D_u=\{1,6,8,11,13\}$, $D_v=\{2,4,5,7,9,12\}$ and $D_w=\{3,10,26\}$ to get a connected graph  without induced cycles.

\ms\nt Consider $s=15$, i.e., $l=3$. We have $y=203$  and $D'=[1, 28]$. There are three induced paths of length $14$ and eleven induced paths of length $12$.  We present them by using their end-edge labels notation:
\[(1,6),\ (2,5),\ (3,4),\ (6+j, 29-j), \mbox{ where } 1\le j\le 11.\]
Note that, the first three paths are of length $14$.
%\[\begin{array}{l}
%1, 174, 29, 146, 57, 118, 85, 90, 113, 62, 141, 34, 169, 6; \\
%2, 173, 30, 145, 58, 117, 86, 89, 114, 61, 142, 33, 170, 5; \\
%3, 172, 31, 144, 59, 116, 87, 88, 115, 60, 143, 32, 171, 4; \\
%7, 168, 35, 140, 63, 112, 91, 84, 119, 56, 147, 28; \\
%8, 167, 36, 139, 64, 111, 92, 83, 120, 55, 148, 27; \\
%9, 166, 37, 138, 65, 110, 93, 82, 121, 54, 149, 26; \\
%10, 165, 38, 137, 66, 109, 94, 81, 122, 53, 150, 25; \\
%11, 164, 39, 136, 67, 108, 95, 80, 123, 52, 151, 24; \\
%12, 163, 40, 135, 68, 107, 96, 79, 124, 51, 152, 23; \\
%13, 162. 41, 134, 69, 106, 97, 78, 125, 50, 153, 22; \\
%14, 161, 42, 133, 70, 105, 98, 77, 126, 49, 154, 21; \\
%15, 160, 43, 132, 71, 104, 99, 76, 127, 48, 155, 20; \\
%16, 159, 44, 131, 72, 103, 100, 75, 128, 47, 156, 19; \\
%17, 158, 45, 130, 73, 102, 101, 74, 129, 46, 157, 18.
%\end{array}\]
We can split the  path $(14, 21)$ whose consecutive edge labels are $14, 161, 42, 133, 70, 105, 98, 77, 126, 49, 154, 21$ into paths $(14,133): 14, 161, 42, 133$ and $(70, 21): 70, 105, 98, 77, 126, 49, 154, 21$. A solution is given by $D_u=[1,7]\cup[15,20]\cup\{70\}$, $D_v=\{13,14,21,22, 133\}$ and $D_w = [8,12]\cup [23, 28]$. \rsq
\end{example}

\ms\nt We now consider $s=4l+3$, $l\ne 1,3$.  Note that $y= (8l+5)(2l+1)$ and $D'=[1, 8l+4]$. Observe that every induced path of length $4l+2$ must have end-edge labels sum $2l+1$, and every induced path of length $4l$ must have end-edge labels sum $5(2l+1)$.

\ms\nt  We may choose $k$ such that $0\le k\le l$ and $8l+5-k\equiv 0\pmod{5}$, i.e., $8l+5 = 5t+k$ for some $t$. Clearly, if $l\ge 4$, then $t\ge l+2\ge 6$; if $l=2$, then $k=1$ and hence $t=4$.

\ms\nt Now, take any $t$ induced paths of length $4l$ and $k$ induced path(s) of length $4l+2$. Their end-edge labels sum is $(5t+k)(2l+1)=(8l+5)(2l+1)=y$. Merging their end vertices, we get $C((4l)^{[t]}, (4l+2)^{[k]})=G_1$ that has a vertex of degree $2(t+k)$ with induced vertex label $y$.
Thus, the remaining induced paths also have their end-edge labels sum is $y$.  Merging their end vertices, we get $C((4l)^{[3l+2-t]},(4l+2)^{[(l-k)]})=G_2$ that has a vertex of degree $2(4l+2-t-k)$ with induced vertex label $y$. Note that all these induced paths lead to part (a) solutions of our bridge graphs problem in~\cite{Lau+Shiu+Nal+Zhang+Prem}, and $G_1+G_2$ is a solution if we extend to non-bridge graphs too.

\ms\nt {\bf Splitting Approach~C: } For $g\ge 2$, consider $g$ end-edges of $G_1$ (or $G_2$)  with total labels sum $\sg$  such that $\sg$ satisfies the condition of Lemma~\ref{lem-split}. By Lemma~\ref{lem-split}, there is an induced path which can be split into $Q_1$ and $Q_2$. Let $\sg$ be the label of an edge $e$.  Now, swap the edge $e$ and the $g$ end-edges, say $e_1$ to $e_g$, so that these $g$ end-edges are now adjacent to the edge with label $y-\sg$ to get a vertex of degree $g+1$, whereas $e$ is now adjacent to all other $2(t+k) - g$ (or $2(4l+2-t-k)-g$) end-edges of $G_1$ (or $G_2$) to form a vertex of degree $2(t+k) - g+1$ (or $2(4l+2-t-k)-g+1$). Therefore, we have obtained a (possibly disconnected graph) solution with three vertices of degree $2(t+k) - g+1$, $g+1$, $2(4l+2-t-k)$, (or $2(4l+2-t-k)-g+1$, $g+1$, $2(4l+2-t-k)$), respectively.

 \ms\nt Since $D'=[1,8l+4], D^* = \{(2l+1)(8l+1)-\epsilon \mid \epsilon \in D'\} = [(2l+1)(8l-3), 2l(8l+5)]$, such integer $\sg$ exists because we may choose $g=2$ and $\sg=1+(8l+4)=8l+5$. Clearly,  $\sg \not\in D'\cup D^*$  which satisfies the condition of Lemma~\ref{lem-split}.

\ms\nt{\bf Case~D-1. } In this case, $s=3l$, $l\ge 2$, $m=4(3l-1)(l-1)$, $x=(2l-1)(6l-5)$, $y=2(3l-1)(2l-1)$. There are $l-1$ induced paths of length $4l-2$, $2l-1$ induced paths of length $4l-4$ and one induced path of length $2l-2$. Namely, they are
\[(i, 2l-1-i), 1\le i\le l-1;\quad (2l-2+j, 6l-2-j), 1\le j\le 2l-1;\quad (4l-2, (3l-1)(2l-1)).\]
Clearly, their end-edge labels sums are $2l-1$, $4(2l-1)$ and $(3l+1)(2l-1)$, respectively.

\begin{example} Consider $l=2$ with $y=30$. We begin with  one induced path of length 6, three induced paths of length 4 and one induced path of length 2, namely \[(1,2), (3,9), (4,8), (5,7), (6,15).\]
We can first partition the end-edge labels into $\{1,2,3,4,5,6,9\}$ and $\{7,8,15\}$. Since $4+6=10$ is the label in the path $1,20,10,11,19,2$, we then have $D_1 = \{1,2,3,5,9,10\}, D_2 = \{4,6,20\}, D_3 =\{7,8,15\}$ that gives a connected graph solution. \rsq
%1,20,10,11,19,2; & 3,18,12,9; & 4,17,13,8; & 5,16,14,7; & 6,15.
\end{example}

\ms\nt Consider $l\ge 3$. Now, $D'=[1, 6l-3]\cup\{(3l-1)(2l-1)\}$ and $D^*=[2(2l-1)(3l-4), 4(3l-1)(l-1)]\cup\{(2l-1)(3l-4)\}$. Similar to Case~C, we may choose $k$ such that $0\le k\le l-1$ and $6l-2-k\equiv 0\pmod{4}$, i.e., $6l-2=4t+k$ for some $t$.  Note that $t\ge 4$. Now, take any $t$ induced paths of length $4l-4$ and $k$ induced path(s) of length $4l-2$. Their end-edge labels sum is $(4t+k)(2l-1) = (6l-2)(2l-1) = y$. Merging the end-vertices, we get $C((4l-4)^{[t]}, (4l-2)^{[k]}) = G_1$ that has a vertex of degree $2(t+k)$ with induced vertex label $y$. Thus, the remaining induced paths also have their end-edge labels sum is $y$. Merging their end-vertices, we get $C((4l-4)^{[2l-1-t]}, (4l-2)^{[l-1-k]}, 2l-2) = G_2$ that has a vertex of degree $2(3l-1-t-k)$.  We may apply the Splitting Approach~C by choosing suitable end-edge labels with sum $\sg\notin D'\cup D^*$ to obtain a suitable solution $G$ for each possible $l$.

\begin{example} Take $l=3$, i.e., $s=9$.  Now $x=65$, $y=80$, $D'=[1,15]\cup\{40\}$ and $D^*=[50, 64]\cup\{25\}$. We have induced paths \[(1,4), (2,3) \mbox{ of length 10};\quad (5,15), (6,14), (7,13), (8,12), (9,11) \mbox{ of length 8};\quad (10, 40) \mbox{ of length 4}.\]
We may take $k =0$, $t=4$ and $G_1$ (and $G_2$) is obtained by merging the end-vertices incident to edges with labels in $[5,8]\cup[12,15]$ to get a vertex of degree 8 (and in $[1,4]\cup [9,11]\cup \{40\}$ to get a vertex of degree 8). Now, choose $\sg\in [16, 49]\setminus\{25,40\}$.
We may  choose three edge labels $5, 6, 7$. So $\sg = 5+6+7 = 18$ which is an edge label of the induced path $(2,3) : 2, 63, 17, 48, 32, 33, 47, 18, 62, 3$. Thus, $e$ with label $18$ is adjacent to edges with labels $47$ and $62$, respectively. We split the path $(2,3)$ to get $Q_1 = (2, 18)_8$ and $Q_2 = (62,3)_2$.  We can now have $D_u=[1,4]\cup \{9,10,11,40\}$, $D_v = \{5,6,7,62\}$ and $D_w = \{8,18\}\cup[12,15]$ that gives a connected graph solution. \rsq
\end{example}

\ms\nt{\bf Case~D-2. } Now, $s=3l+1$, $l\ge 2$. $y = 2l(6l-1)$, the edge label set is $[1, 6l-1]\cup \{l(6l-1)\}$. There are $3l-1$ induced paths of length $4l-2$, namely $(i,6l-1-i)$, $1\le i\le 3l-1$; and one induced path of length $2l-2$, namely, $(6l-1, l(6l-1))$.

\ms\nt We now show that there exists at least one solution for each $l\ge 2$. Observe that for  each induced path of length $4l-2$, the sum of end-edge labels is $6l-1$. Take any $2l-1$ of these paths and the end-vertex adjacent to label $6l-1$ of the path of length $2l-2$, merge these end-vertices to get a vertex of degree $4l-1$ with induced vertex label $(2l-1)(6l-1) + (6l-1) = y$.  Take the remaining $l$ paths of length $4l-2$ and the end-vertex adjacent to label $y/2$ of the path of length $2l-2$, merge these end-vertices to get a vertex of degree $2l+1$ with induced vertex label $l(6l-1) +(6l^2-l) = (12l^2-2l) = y$. This graph is obtained from  $C((4l-2)^{[2l-1]})$ and $C((4l-2)^{[l]})$ and connect the vertices of degree $4l-2$ and $2l$ by the path $(6l-1, l(6l-1)$. Applying the idea as in Case~C to obtain a suitable $Q_1$ and a $Q_2$ will give a required solution which may be a disconnected graph.

\begin{example}
When $s=7$, i.e., $l=2$. We get $y=44$. There are 5 induced paths of length 6 and one induced path of length 2, namely
\[(1,10),\ (2, 9),\ (3,8),\ (4,7),\ (5,6),\ (11,22).\]

\nt We can first partition the end-edge labels into $\{1,2,3,8,9,10,11\}$ and $\{4,5,6,7,22\}$.  Choose $\sg=8+9=17$ which is an edge label of the induced path $(5,6)_6 : 5, 28, 16, 17, 27, 6$.  We can have $D_u=\{1,2,3,10,11,17\}$, $D_v=\{4,5,6,7,22\}$, $D_w = \{8,9,27\}$ that gives a connected graph solution.

\ms\nt We may also begin with the partition $\{1,10,11,22\}$ and $\{2,3,4,5,6,7,8,9\}$. Since $10+11=21$ is the label in the path $(1,10)$ with consecutive edge labels 1, 32, 12, 21, 23, 10. We can split the path $(1,10)$ into $Q_1$ with edge labels $1,32,12,21$ and $Q_2$ with edge labels $23,10$ to have the partition $\{1,10,11,22\}$, $\{2,3,4,5,9,21\}$, $\{6,7,8,23\}$. Since $11$ and $22$ are in the same partite set and $2+9=11$, we can swap $2,9$ and $11$ to get the partition $D_u=\{1,2,9,10,22\}$, $D_v=\{3,4,5,11,21\}$, $D_w = \{6,7,8,23\}$.

\ms\nt A connected graph solution is obtained if we have the partition $D_u =\{1, 21, 22\}$, $D_v = \{6,7,8,23\}$, $D_w = \{2,3,4,5,9,10,11\}$. Another solution is given by $D_u = \{1,2,9,11,21\}$, $D_v = \{3,4,5,10,22\}$, $D_w = \{6,7,8,23\}$. \rsq\end{example}

\ms\nt{\bf Case~E. } In this case $s=2l+1$, $l\ge 2$,  $y=(2l-1)(4l-1)$ and $D'=[1, 4l-2]$.

\begin{example}\label{ex-E}
\nt For $s=5$, i.e., $l=2$, we get $y=21$, $x=15$, $m=14$. We begin with an induced path of length $6$ and two of length $4$ that have consecutive edge labels as follows.
\[\begin{array}{lll}
1,14,7,8,13,2; & 3,12,9,6; & 4,11,10,5. \end{array}\]
We split $3,12,9,6$ and $4,11,10,5$ into $3,12$; $9,6$; $4,11$; $10,15$ to get $D_u = \{1,9,11\}$, $D_v = \{2,3,6,10\}$ and $D_w = \{4,5,12\}$ to get a 2-connected graph solution. We can also split $1,14,7,8,13,2$ into $1,14$; $7,8$; $13,2$ to get $D_u = \{1,7,13\}$, $D_v=\{2,5,14\}$ and $D_w=\{3,4,6,8\}$ to get a 1-connected graph solution.

\ms\nt For $s=7$, i.e., $l=3$, we get $y=55$, $x=45$, $m=44$. We begin with two induced paths of length $10$ and three of length $8$ that have consecutive edge labels as follows.
\[\begin{array}{lll}
1,44,11,34,21,24,31,14,41,4; & 2,43,12,33,22,23,32,13,42,3; & \\ 5,40,15,30,25,20,35,10; & 6,39,16,29,26,19,36,9; & 7,38,17,28,27,18,37,8. \end{array}\]
We split first path into $1,44,11,34$ and $21,24,31,14,41,4$, and then split the second path into $2,43,12,33,22,23$, and $32,13,42,3$ to get a total of 7 paths. A 1-connected graph solution is given by $D_u=\{1,4,8,9,10,23\}$, $D_v=\{2,21,32\}$ and $D_w=\{3,5,6,7,34\}$.

\ms\nt We may also split $1,44,11,34,21,24,31,14,41,4$ into $1,44,11,34,21,24$ and $31,14,41,4$, and then split $5,40,15,30,25,20,35,10$ into $5,40$ and $15,30,25,20,35,10$ to get another solution given by $D_u=\{1,5,6,9,10,24\}$, $D_v=\{2,3,4,15,31\}$ and $D_w=\{7,8,40\}$. \rsq
\end{example}

\ms\nt  We now show that there exists a solution for each $l\ge 4$.  We have\\ $D^*=[(2l-1)(4l-5), (2l-1)(4l-3)-1]$.
Thus, there are
\begin{enumerate}[1)]
\item $l-1$ induced paths of length $4l-2$, $(i, 2l-1-i)$, for $1\le i\le l-1$, with end-edge label sum $2l-1$;
\item $l$ induced paths of length $4l-4$, $(2l-2+j, 4l-1-j)$, for $1\le j\le l$, with  end-edge label sum $3(2l-1)$.
\end{enumerate}

\nt Observe that for $z\ge 1$, any $2z$ induced paths of length $4l-4$ has end-edge labels sum  $2z(6l-3)=3z(4l-2)$. Moreover, an induced path of length $4l-2$ with $\a_1=i$ has $\b_{3z} =8l^2-10l+3-i-(3z-1)(4l-2) = 8l^2-6l+1-i-3z(4l-2)$ so that $\a_{3z+1} = i+3z(4l-2)$.

\ms\nt{\bf Splitting Approach E-1:} For $z,z'\ge 1$, $z+z'\le \lfloor\frac{l}{2}\rfloor$ and $1\le i\ne i' \le l-1$,  we can split the induced path $(i, 2l-1-i)$ into paths of length $6z$ and $4l-2-6z$, namely $(i,8l^2-6l+1-i-3z(4l-2))$ and $(2l-1-i, i+3z(4l-2))$; and split the induced path $(i', 2l-1-i'))$ into paths $6z'$ and $4l-2-6z'$, namely $(i', 8l^2-6l+1-i'-3z'(4l-2))$ and $(2l-1-i', i'+3z'(4l-2))$.

\ms\nt Now we may take $i=1$ and $i'=2$. We let $D_u = \{1, 8l^2-6l-3z(4l-2)\}\cup[2l-1, 2l-2+2z]\cup[4l-1-2z, 4l-2]$, $D_v = \{2, 8l^2-6l-1-3z'(4l-2)\} \cup [2l-1+2z, 2l-2+2z+2z']\cup [4l-1-2z-2z', 4l-2-2z]$ and $D_w = \{2l-2, 1+3z(4l-2), 2l-3, 2+3z'(4l-2)\} \cup [3,2l-4] \cup [2l-1+2z+2z', 4l-2-2z-2z']$.  It is routine to verify that sum of entries in each set is $y$. Thus, we have a 3-component graph solution.

\ms\nt\ms\nt{\bf Splitting Approach E-2:} Let $z\ge 1$, $z'\ge 0$, $z+z'\le (l-3)/2$. By a similar idea, a connected graph solution for each $l\ge 5$ can be obtained if  $D_u = \{1, 8l^2-6l-3z(4l-2)\}\cup[2l-1, 2l-2+2z]\cup[4l-1-2z, 4l-2]$ as above, $D_v=\{1+3z(4l-2), 18l-9+3z'(4l-2)\}\cup [2,2l-2]\cup [ 2l-1+2z, 3l-5-2z']\cup [3l+2+2z', 4l-2-2z]$ and $D_w = \{8l^2-24l+10-3z'(4l-2)\}\cup [3l-4-2z',3l+1+2z']$.  Note that if $z+z'=(l-3)/2$, then $D_v=\{1+3z(4l-2), 18l-9+3z'(4l-2)\}\cup [2, 2l-2]$.  Here, we split an induced path of length $4l-2$ (respectively $4l-4$) into paths  $(1, 8l^2-6l-3z(4l-2))_{6z}$ and $(2l-2, 1+3z(4l-2))_{4l-2-6z}$ (respectively, $(2l-1, 8l^2-24l+10+3z'(4l-2))_{8+6z'}$ and $(4l-2, 18l-9-3z'(4l-2))_{4l-12-6z'}$.  %{\cyan of length $6z$ and $4l-2-6z$ with end-edge labels $1, 8l^2-6l-3z(4l-2)$ and $2l-2, 1+3z(4l-2)$ (respectively, of length $8+6z'$ and $4l-12-6z'$ with end-edge labels $2l-1, 8l^2-24l+10+3z'(4l-2)$ and $4l-2, 18l-9-3z'(4l-2)$). Note that this approach can give a total of $\frac{k(k+1)}{2}$ $(k=\lfloor\frac{n-3}{2}\rfloor)$ distinct connected graph solutions.}

\begin{example} Take $s=15$, i.e., $l=7$. We get $y=351$, $x = 325$, $y-x = 26$.  We begin with $6$ induced paths of length $26$, and $7$ induced paths of length $24$ that have consecutive edge labels as follows.
{\small\[\begin{array}{l}
1, 324, 27, 298, 53, 272, 79, 246, 105, 220, 131, 194, 157, 168, 183, 142, 209, 116, 235, 90, 261, 64, 287, 38, 313, 12; \\
2, 323, 28, 297, 54, 271, 80, 245, 106, 219, 132, 193, 158, 167, 184, 141, 210, 115, 236, 89, 262, 63, 288, 37, 314, 11;\\
3, 322, 29, 296, 55, 270, 81, 244, 107, 218, 133, 192, 159, 166, 185, 140, 211, 114, 237, 88, 263, 62, 289, 36, 315, 10;\\
4, 321, 30, 295, 56, 269, 82, 243, 108, 217, 134, 191, 160, 165, 186, 139, 212, 113, 238, 87, 264, 61, 290, 35, 316, 9; \\
5, 320, 31, 294, 57, 268, 83, 242, 109, 216, 135, 190, 161, 164, 187, 138, 213, 112, 239, 86, 265, 60, 291, 34, 317, 8; \\
6, 319, 32, 293, 58, 267, 84, 241, 110, 215, 136, 189, 162, 163, 188, 137, 214, 111, 240, 85, 266, 59, 292, 33, 318, 7; \\
13, 312, 39, 286, 65, 260, 91, 234, 117, 208, 143, 182, 169, 156, 195, 130, 221, 104, 247, 78, 273, 52, 299, 26; \\
14, 311, 40, 285, 66, 259, 92, 233, 118, 207, 144, 181, 170, 155, 196, 129, 222, 103, 248, 77, 274, 51, 300, 25; \\
15, 310, 41, 284, 67, 258, 93, 232, 119, 206, 145, 180, 171, 154, 197, 128, 223, 102, 249, 76, 275, 50, 301, 24; \\
16, 309, 42, 283, 68, 257, 94, 231, 120, 205, 146, 179, 172, 153, 198, 127, 224, 101, 250, 75, 276, 49, 302, 23; \\
17, 308, 43, 282, 69, 256, 95, 230, 121, 204, 147, 178, 173, 152, 199, 126, 225, 100, 251, 74, 277, 48, 303, 22; \\
18, 307, 44, 281, 70, 255, 96, 229, 122, 203, 148, 177, 174, 151, 200, 125, 226, 99, 252, 73, 278, 47, 304, 21; \\
19, 306, 45, 280, 71, 254, 97, 228, 123, 202, 149, 176, 175, 150, 201, 124, 227, 98, 253, 72, 279, 46, 305, 20.
\end{array}\]}

\ms\nt Consider Splitting Approach E-1 that has $2\le z+ z' \le 3$. If we take $z=z'=1$, we split the first two induced paths into
\[\begin{array}{l}
1, 324, 27, 298, 53, 272; \\
79, 246, 105, 220, 131, 194, 157, 168, 183, 142, 209, 116, 235, 90, 261, 64, 287, 38, 313, 12; \\
2, 323, 28, 297, 54, 271; \\
80, 245, 106, 219, 132, 193, 158, 167, 184, 141, 210, 115, 236, 89, 262, 63, 288, 37, 314, 11 \end{array}\]
Thus, $D_u = \{1, 272\}\cup \{13, 14, 25, 26\}$, $D_v = \{2, 271\}\cup \{15,16,23,24\}$ and $D_w=\{11,12,79,80\}\cup[3,10]\cup[17,22]$.

\ms\nt Consider Splitting Approach E-2 that has $1\le z+z'\le 2$. If we take $z=2$, $z'=0$, we split the first and the 7th induced paths into
\[\begin{array}{l}
1, 324, 27, 298, 53, 272, 79, 246, 105, 220, 131, 194; \\
157, 168, 183, 142, 209, 116, 235, 90, 261, 64, 287, 38, 313, 12; \\
13, 312, 39, 286, 65, 260, 91, 234; \\
117, 208, 143, 182, 169, 156, 195, 130, 221, 104, 247, 78, 273, 52, 299, 26. \end{array}\]
Thus, $D_u =  \{1, 194\}\cup \{13, 14, 15, 16, 23, 24, 25, 26\}$, $D_v = \{117,157\}\cup [2,12]$ and $D_w = \{234\}\cup [17,22]$.

\ms\nt If we take $z=z'=1$, we split the first and the 7th induced paths into
\[\begin{array}{l}
1, 324, 27, 298, 53, 272; \\
79, 246, 105, 220, 131, 194, 157, 168, 183, 142, 209, 116, 235, 90, 261, 64, 287, 38, 313, 12; \\
13, 312, 39, 286, 65, 260, 91, 234, 117, 208, 143, 182, 169, 156; \\
195, 130, 221, 104, 247, 78, 273, 52, 299, 26. \end{array}\]
Thus, $D_u =  \{1, 272\}\cup \{13, 14, 25, 26\}$, $D_v = \{79, 195\}\cup [2,12]$ and $D_w = \{156\}\cup [15,24]$.

\ms\nt If we take $z=1, z'=0$, we split the first and the 7th induced paths into
\[\begin{array}{l}
1, 324, 27, 298, 53, 272; \\
79, 246, 105, 220, 131, 194, 157, 168, 183, 142, 209, 116, 235, 90, 261, 64, 287, 38, 313, 12; \\
13, 312, 39, 286, 65, 260, 91, 234; \\
117, 208, 143, 182, 169, 156, 195, 130, 221, 104, 247, 78, 273, 52, 299, 26.\end{array}\]
Thus, $D_u =  \{1, 272\}\cup \{13, 14, 25, 26\}$, $D_v = \{79, 117\}\cup [2,12]\cup \{15,16,23,24\}$ and $D_w = \{234\}\cup [17,22]$. \rsq
\end{example}

\nt Thus, we have some solutions for each possible $l$ of Case~E-1. For the solution of Case~E-2, we may consider $D_u=[1, 4l-2]$ to get $\sg(D_u)=y$. Pick one induced path of length at least 6. Choose two vertices with induced label $y$. Rename them by $v$ and $w$, respectively. We can then obtain a graph in $\widetilde{\mathcal B} _3$.

\begin{example}
Let us consider $s=5,7$ again. When $l=2$, there is only one induced path of length 6, namely $(1,2)$. So we split it into 3 parts: $1, 14$; $7,8$; $13, 2$. Thus, we can have $D_u=\{1,2, 3,4,5,6,\}$, $D_v=\{14, 7\}$, $D_w=\{8, 13\}$.

\ms\nt When $l=3$, we may choose the induced path $(1,4)$ and split it into 3 parts: $1,44,11,34$; $21,24,31,14$; $41,4$. Thus, we can have $D=[1,10]$, $D_v=\{34, 21\}$ and $D_w=\{14, 41\}$. \rsq
\end{example}

\ms\nt{\bf Case~F. } We now give a simple way to construct a solution for each $s\ge 5$.

%\begin{example}

\ms\nt For $s=5$, we have $y=30$, $x=24$. So, we begin with three induced paths of length 10 and an induced path of length 4 that have consecutive edge labels as follows.
\[\begin{array}{lll}
1, 24, 6, 19, 11, 14, 16, 9, 21, 4; & 2, 23, 7, 18, 12, 13, 17, 8, 22, 3; & 5, 20, 10, 15.
\end{array}\]
We then split the first two paths into two paths of length 4 and another two of length 6 to get the followings.
\[\begin{array}{lllll}
1, 24, 6, 19, 11, 14; & 16, 9, 21, 4; & 2, 23, 7, 18;  & 12, 13, 17, 8, 22, 3; & 5, 20, 10, 15.
\end{array}\]
Thus, we can have $D_u=\{1, 14, 15\}$, $D_v=\{2, 12,16\}$ and $D_w=\{3, 4, 5, 18\}$. This is a connected graph solution. %\rsq
%\end{example}

\ms\nt Consider $s\ge 6$. First, we split the induced path with $\a_1=1$ at $\b_{s-2}$ into a path of length $2s-4$ with end-edge labels $1$ and $(2s-5)^1-1 - (s-3)(2s-5) = 2s^2-9s+9 = (s-3)(2s-3) = y/2-1$, and a path of length $2s-6$ with end-edge labels $1+(s-2)(2s-5) = 2s^2-9s+11 = y/2+1$ and $2s-6$. Next, we split the induced path with $\a_1 = 3$ at $\b_{s-2}$ into a path of length $2s-4$ with end-edge labels $3$ and $y/2-3$, and a path of length $2s-6$ with end-edge labels $y/2+3$ and $2s-8$. Now, we can have $D_u = \{1, y/2-1, y/2\}$, $D_v=\{2, y/2-3, y/2+1\}$ and $D_v = [3,2s-5]\cup \{y/2+3\}$. It is routine to check that each of $D_u, D_v, D_w$ has total elements sum is $y$. This is also a connected graph solution.

\begin{example} Take $s=6$, we have $y=56$, $x=49$. After splitting, we can have the 6 required induced paths with consecutive edge labels as follows.
\[\begin{array}{lll}
1, 48, 8, 41, 15, 34, 22, 27; & 29, 20, 36, 13, 43, 6; & 2, 47, 9, 40, 16, 33, 23, 26, 30, 19, 37, 12, 44, 5; \\
3, 46, 10, 39, 17, 32, 24, 25; & 31, 18, 38, 11, 45, 4;  & 7, 42, 14, 35, 21, 28.
\end{array}\]
Now, $D_u = \{1, 27, 28\}$, $D_v = \{2, 25, 29\}$ and $D_w=[3,7]\cup\{31\}$. We can extend the same idea to get more non-isomorphic graph solutions . For example, we can split the paths with an end-edge label 1 and 7 as follows:
\[\begin{array}{lll}
1, 48, 8, 41, 15, 34, 22, 27, 29, 20; & 36, 13, 43, 6; & 2, 47, 9, 40, 16, 33, 23, 26, 30, 19, 37, 12, 44, 5; \\
3, 46, 10, 39, 17, 32, 24, 25, 31, 18, 38, 11, 45, 4;  & 7, 42; &  14, 35, 21, 28.
\end{array}\]
We can have $D_u=\{1,7,20,28\}$, $D_v = \{2,3,4,5,42\}$ and $D_w = \{6, 14, 36\}$. \rsq
\end{example}

\nt Thus, we have some solutions for each possible $s$ under Case~F-1. By a similar approach as Case~E, we may construct some solutions for each possible $s$ under Case~F-2.
\end{proof}

\section{Conclusion and Future Directions}

In this paper, we successfully obtained necessary and sufficient conditions for a large family of bipartite graph with local antimagic chromatic number 2. To our best knowledge, this is the first successful approach (in the field of graph labelings) in determining the edge labeling of a family of infinitely many graphs by algebraic analysis in which we can extend further to obtain all the possible solutions in $\widetilde{\mathcal B}_d$ for every $d\ge 4$, once the value of $d$ and the size of the graph $m$, are fixed. Moreover, the labeling is unique for all edges, i.e., once an edge label is determined, its neighboring and subsequent edge labels are determined.  Thus, we have the following problem.

\begin{problem} For each $d\ge 4$, find necessary and sufficient conditions for each $G\in\widetilde {\mathcal B}_d$ without case-by-case analysis as in Theorem~\ref{thm-iff}. \end{problem}

\nt Since the characterization of $G\in \widetilde {\mathcal B}_2$ is not yet completed, we also have the following problem.

\begin{problem} For each $d\ge 2$, give complete characterization of $G\in \widetilde {\mathcal B}_d$. \end{problem}

\ms \nt This gives partial solutions to~\cite[Problem 4.1]{Lau+Shiu+Nal+Zhang+Prem}. Our proofs for sufficiency also shows the existence of infinitely many disconnected such graphs (see Cases A-2 and B-1). This is also a natural extension to the results obtained by Ba\v{c}a et. al ~\cite{Baca+A+W} that gives the bounds for copies of graphs.  We shall in another paper gives all the   2- and 3-component solutions in $\widetilde{\mathcal B}_d, d = 2,3$, respectively. Specifically, our current results and subsequent paper will be a natural extension to the existence of infinitely many (possibly regular) disconnected  bipartite and tripartite graphs without pendant vertices that have local antimagic chromatic number 3 (see~\cite{Chan+Lau+Shiu, Chan+Shiu+Lau, Lau+Shiu+ArsComb, Lau+Shiu+CN, Lau+Shiu+UM, Lau+Shiu+Pre+Nal, Lau+Shiu+Nal+Pre, XLZ-IJMSI}).

\ms\nt In~\cite{Chan+Shiu+Lau}, the authors also showed that $\chi_{la}(2C_3+C_k) = 4$ for $k\ge 4$ and $\chi_{la}(3C_3) = 5$. We end this paper with the following problem that arises naturally.

\begin{problem} Show the existence of infinitely many (regular) disconnected graphs $G$ without pendant vertices such that $\chi(G) = k$ and $\chi_{la}(G) \ge k$ for each $k\ge 2$.  \end{problem}

\end{document}